\documentclass[reqno,12pt,a4paper]{amsart}
\usepackage{amscd,amsfonts,amssymb,amsthm,latexsym}
\usepackage{mathtools}
\usepackage{bm}
\usepackage{srcltx}

\theoremstyle{plain}
\newtheorem{theorem}{Theorem}[section]
\newtheorem*{theorem*}{Theorem}

\newtheorem{lemma}{Lemma}[section]
\newtheorem{proposition}{Proposition}[section]

\theoremstyle{definition}

\newtheorem*{definition*}{Definition}

\theoremstyle{remark}

\newtheorem*{remark*}{Remark}

\numberwithin{equation}{section}

\begin{document}
\raggedbottom 

\title[On sums of two squares and a basis of order $2$]{On sums of two squares and a basis of order $2$}

\author{Artyom Radomskii}

\begin{abstract} Let $\mathcal{R}$ denote the set of integers $n$ that can be represented as the sum $n = x^2 + y^2$ with $(x,y) = 1$. Let $a$ and $b$ be integers with $a>0$, $a \nmid b$. We show that for sufficiently large positive integer $N$ there are two strings of consecutive positive integers $I_{1}=\{n_1-m,\ldots, n_1+m\}$ and $I_{2}=\{n_2-m, \ldots, n_2+m\}$ such that $m = [(\log N) (\log \log N)^{1/325565}]$, $I_{1}\cup I_{2} \subset [1, N]$, $N = n_1 + n_2$,  and for any $n\in I_{1}\cup I_{2}$ at least one of  $n$ or $an+b$ does not lie in $\mathcal{R}$. In particular, we have $n(an+b)\notin \mathcal{R}$ for all $n\in I_{1}\cup I_{2}$.
\end{abstract}

 \address{HSE University, Moscow, Russian Federation}

\keywords{Gaps, sums of two squares, sieves}

\email{artyom.radomskii@mail.ru}

\maketitle

\section{Introduction}

Let $\mathcal{R}$ denote the set of integers $n$ that can be represented as the sum $n = x^2 + y^2$ with $(x,y) = 1$.

Set
\[
C(\rho):= \sup\Big\{\delta\in \Big(0,\frac{1}{2}\Big) : \frac{6\cdot 10^{2\delta}}{\log (1/ (2\delta))}<\rho\Big\}.
\]

Our main result is the following

\begin{theorem}\label{T1}
 Let $a$ and $b$ be integers, $a>0$, $a \nmid b$. Let $0< \delta < C(1/2)$. Then for sufficiently large positive integer $N$ there are two strings of consecutive positive integers $I_{1}=\{n_1-m,\ldots, n_1+m\}$ and $I_{2}=\{n_2-m, \ldots, n_2+m\}$ such that $m = [(\log N) (\log \log N)^{\delta}]$, $I_{1}\cup I_{2} \subset [1, N]$, $N = n_1 + n_2$,  and for any $n\in I_{1}\cup I_{2}$ at least one of  $n$ or $an+b$ does not lie in $\mathcal{R}$. In particular, we have $n(an+b)\notin \mathcal{R}$ for all $n\in I_{1}\cup I_{2}$.
\end{theorem}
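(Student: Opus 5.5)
Here is the plan: an Erdős–Rankin type construction (in the Maier–Pomerance / Ford–Green–Konyagin–Maynard–Tao style) built on Chinese remainder congruences. The key observation is elementary. An integer $n\in\mathcal{R}$ with $n>0$ has no prime divisor $q\equiv 3\pmod 4$, and is not divisible by $4$. So it suffices to make each $n\in I_1$ satisfy either $q\mid n$ or $q\mid an+b$ for some prime $q\equiv 3\pmod 4$ that divides it exactly to the first power; the first-power condition can in fact be dropped, since $q\mid m$ already forces $m\notin\mathcal{R}$. The condition $a\nmid b$ is what makes the residues $n\equiv 0$ and $n\equiv -b\bar a$ genuinely usable and distinct modulo the relevant primes. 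I would also use the freedom to kill $n$ with $4\mid n$ or $4\mid an+b$.

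To handle both strings at once, I take $n_2=N-n_1$. Then $I_2=N-I_1$, so a sieve condition on $n'\in I_2$ becomes the condition on $n=n_1+j$ that $N-n$ or $a(N-n)+b$ is divisible by some prime $q\equiv3\pmod 4$. Each prime $q\equiv 3\pmod 4$ in a range $(1,x]$ is therefore available with four residue classes for $n$:
\[
n\equiv 0,\qquad n\equiv -b\bar a,\qquad n\equiv N,\qquad n\equiv N+b\bar a \pmod q,
\]
where the residue $-b\bar a$ is only used when $q\nmid a$. The goal is to cover the shifts $j\in[-m,m]$ with $y=2m+1\approx x(\log x)^{\delta}$-ish length, where $x\asymp\log N$. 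We then solve for $n_1$ modulo $P=\prod_{q\le x,\,q\equiv 3(4)} q$ by CRT, with $P\le e^{(1+o(1))x/2}$, and choose $x=c\log N$ so that $n_1$ can be placed in, say, $[N/3,2N/3]$. This guarantees $I_1\cup I_2\subset[1,N]$.

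The covering itself follows the Rankin scheme adapted to primes $\equiv 3\pmod 4$, which have relative density $1/2$; this is the source of $\rho=1/2$ in $C(1/2)$. First, for medium primes $q\in(z,x/2]$ I choose classes so that the survivors in $[1,y]$ are essentially the $z$-smooth numbers together with numbers having a large prime factor $\equiv 1\pmod 4$. Here I would use de Bruijn smooth-number bounds with $z=\exp(\log x\cdot\text{small})$ and $u=\log y/\log z$. Using two residue classes per prime, one from the $I_1$ side and one from the $I_2$ side, roughly squares the sieving efficiency. This is where the $10^{2\delta}$ and the $\log(1/(2\delta))$ in the definition of $C$ enter: they come from the choices $z=y^{\cdot}$ and $u\approx 1/(2\delta)$-type bookkeeping. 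Then I would count the survivors after greedily choosing, for each small prime $q\le z$, the residue classes with the most survivors. Finally I would cover the remaining survivors individually by large primes $q\in(x/2,x]$, $q\equiv3\pmod4$, one or two survivors per prime. By the prime number theorem in progressions there are $\sim x/(4\log x)$ such primes per class choice, and we have four classes per prime.

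The main obstacle is the survivor count. I need the number of survivors after the small- and medium-prime sieve to be at most about $(2-\varepsilon)\cdot x/(2\log x)$. That requires
\[
y\prod_{q\le z,\,q\equiv3(4)}\Bigl(1-\tfrac{2}{q}\Bigr)\approx y/\log z
\]
to be balanced against the smooth-number term $y\,u^{-u}$ and against numbers whose only prime factors $\equiv 3\pmod 4$ are small. Optimizing so that this holds with $y=x(\log x)^{\delta}$ is exactly the inequality $6\cdot10^{2\delta}/\log(1/(2\delta))<1/2$. I would first try the direct Rankin parameter choice $z=\exp(\log x/(10\log\log x))$-style, and adjust the constants until the inequality defining $C(\rho)$ emerges.
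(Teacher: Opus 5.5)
\textbf{There is a genuine gap: the Erd\H{o}s--Rankin mechanism your plan relies on does not exist in this setting, and the paper instead needs the Ford--Konyagin--Maynard--Pomerance--Tao hypergraph covering argument.} Your outer frame does match the paper: a prime $q\equiv 3\pmod 4$ dividing $n(an+b)$ excludes $n$; taking $n_2=N-n_1$ lets one residue of $n_1$ modulo $q$ act on both strings; CRT with $x\asymp\log N$ places the strings; and leftover survivors are killed one per prime in $(x/2,x]$.

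The failure is in the middle step. In Rankin's argument, using the class $0$ for \emph{all} primes in $(z,x/2]$ leaves only $z$-smooth numbers and numbers with a prime factor $>x/2$. The $u^{-u}$ saving on smooth numbers is what beats the random-sieve count. Here only primes $q\equiv 3\pmod 4$ are available. If you set $n_1\equiv 0\pmod q$ for medium $q$, the survivors $j\le y$ are those for which $j(aj+b)$ has no prime factor $\equiv 3\pmod 4$ in $(z,x/2]$. This set contains every $j$ for which $j$ and $aj+b$ are built from primes $\equiv 1\pmod 4$ and primes $\le z$. Such $j$ need not be smooth, so there is no $u^{-u}$ saving. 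Combined with random or greedy choices for $q\le z$, you land at about $y\prod_{q\le x/2}(1-2/q)\asymp y/\log x$ survivors. That is $(\log x)^{\delta}$ times more than the $\asymp x/\log x$ large primes can absorb when $y=x(\log x)^{\delta}$. This is exactly why Rankin-type constructions only give $\gg\log N$ here (Richards' bound). Nothing in your plan beats the random-sieve count by the needed factor, and no choice of $z$ and $u$ will produce $10^{2\delta}$ and $\log(1/(2\delta))$.

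The missing idea is a way for the medium primes to remove far more than their random share. The paper does this in four steps:
\begin{itemize}
\item A random $\mathbf{d}$ modulo $P(z)$ handles the small primes, leaving about $\sigma y$ survivors.
\item For medium $q\in\mathcal{Q}_H$ (so $q\approx y/H$), a shift $\mathbf{n}_q$ is drawn with weight $\lambda'(H;q,n)=1_{\mathrm{AP}\subset S_2'}\,\sigma_2^{-\#\mathrm{AP}}$. This favours shifts whose progressions $\{n+\alpha_{q,i}+qh\}_{h\le KH}$ lie entirely inside the survivor set.
\item First and second moment estimates (Proposition \ref{P2}) show that almost every survivor is covered with nearly the same total probability $2C'$.
\item The hypergraph covering lemma (Lemma \ref{L2}) then leaves few survivors, and it requires $2C'\ge 10^{2\delta}$. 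That requirement is where $10^{2\delta}$ enters.
\end{itemize}

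The factor $\log(1/(2\delta))$ is the harmonic sum $\sum 1/\log H$ over scales $(\log x)^{\delta}\le H\le(\log x)^{1/2}$. The $\rho=1/2$ does not come from the density of primes $\equiv 3\pmod 4$: that factor is cancelled by having two classes $\{0,c_q\}$ per prime. It comes from splitting the medium primes between the two strings ($\mathfrak{H}'$ versus $\mathfrak{H}''$), which halves $C'$.
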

We note that numerical calculations show that $C(1/2)>1/325565$, and so we can take $\delta=1/325565$ in Theorem \ref{T1}.

 Recall that a set $A \subseteq \mathbb{N}$ is called a basis of order $k$ if every sufficiently large positive integer can be represented as a sum of $k$ summands from $A$. Theorem \ref{T1} implies that the set
\begin{align*}
\{n\geq 3: &\text{ for any }l\in [n-m(n), n+m(n)] \text{ at least one of $l$ or $al+b$}\\
 &\text{does not lie in $\mathcal{R}$}, \text{ where } m(n)=[(\log n) (\log\log n)^{\delta}]\}
\end{align*}is a basis of order $2$ for any $\delta < C(1/2)$.

In \cite{FKMPT}, Ford, Konyagin, Maynard, Pomerance, and Tao showed that for any non-constant polynomial $f: \mathbb{Z}\to \mathbb{Z}$ with positive leading coefficient, the set $\{n\leq N: f(n)\text{ composite}\}$ contains an interval of consecutive integers of length $\geq \log N (\log \log N)^{\delta}$ for sufficiently large $N$ and for any $0< \delta < C(1/d)$, where $d=\deg f$. Ford and Gabdullin \cite{Ford.Gabdullin} improved the result in \cite{FKMPT}, replacing $C(1/d)$ by $C(1)$. Here we introduce a modification of the constructions from \cite{FKMPT} and \cite{Ford.Gabdullin}.

Let $\mathcal{A}=\{a_{1}<a_{2}<\ldots\}$ be a set of positive integers, and let
\[
G_{\mathcal{A}}(N)=\max_{a_{n}\leq N} (a_{n+1}-a_{n})
\]denote the maximal gap between elements of $\mathcal{A}$ of size at most $N$. Estimates of the quantities $G_{\mathcal{R}}(N)$ and $G_{\mathcal{S}}(N)$ are of interest, where $\mathcal{S}$ is the set of integers representable as the sum of two squares. The best lower bounds at the moment for $G_{\mathcal{R}}(N)$ and $G_{\mathcal{S}}(N)$ are
\[
G_{\mathcal{R}}(N)\gg \log N,\qquad G_{\mathcal{S}}(N)\gg \log N,
\]which are due to Richards \cite{Richards}.

We also note that the technique proposed by Ford, Konyagin, Maynard, Pomerance, and Tao in \cite{FKMPT} allows to find large gaps between consecutive primes. Theorem 1 in \cite{FKMPT} implies that there is a gap between consecutive primes in $[1, N]$ of size
 \[
 \gg\log N (\log\log N)^{\delta}
 \] for any $\delta < C(1)\approx 1/835$. This is stronger than the trivial bound of $(1+o(1))\log N$, which is immediate from the Prime Number Theorem, but is worse than the current best bounds for this problem. Indeed, the problem of finding large gaps between consecutive primes has a long history, and it is currently known that gaps of size
 \[
 \gg \log N\,\frac{\log\log N\log\log\log\log N}{\log\log\log N}
 \]exist below $N$ if $N$ is large enough, a recent result of Ford, Green, Konyagin, Maynard, and Tao \cite{FGKMT.LARGE}. The key interest is that the technique from \cite{FKMPT} applies to much more general sieving situations, to which it appears difficult to adapt the previous techniques.

\section{Notation}

We use $X\ll Y$, $Y\gg X$, or $X=O(Y)$ to denote the estimate $|X|\leq C Y$ for some constant $C>0$, and write $X\asymp Y$ for $X \ll Y \ll X$. Throughout the remainder of the paper, all implied constants in $O$, $\ll$ or $\gg$ may depend on quantities $a$, $b$, $\delta$, and on the positive parameters $K$, $\xi$, $M$, and $\varepsilon$ which we will describe below. We also assume that the quantity $x$ is sufficiently large in terms of all of these parameters.

The notation $X=o(Y)$ means that $\lim_{x\to \infty} X/Y = 0$, and the notation $X\sim Y$  means that $\lim_{x\to \infty} X/Y = 1$.

If $S$ is a statement, we use $1_{S}$ to denote its indicator, thus $1_{S}=1$ when $S$ is true and $1_{S}=0$ when $S$ is false.

We will rely on probabilistic methods in this paper. Boldface symbols such as $\mathbf{d}$, $\mathbf{n}$, $\mathbf{S}$, $\bm{\lambda}$, etc. denote random variables (which may be real numbers, random sets, random functions, etc.) Most of these random variables will be discrete (in fact they will only take on finitely many values), so that we may ignore any technical issues of measurability. We use $\mathbb{P}(\mathbf{E})$ to denote the probability of a random event $\mathbf{E}$, and $\mathbb{E}(\mathbf{X})$ to denote the expectation of the random (real-valued) variable $\mathbf{X}$.

The symbol $p$ (as well as variants such as $p_1$, $p_2$, etc.) will always denote a prime. We write $\nu(n)$ for the number of distinct prime divisors of $n$, $\tau(n)$ for the number of positive integer divisors of $n$, and $\varphi(n)$ for Euler's function (the order of the multiplicative group of reduced residue classes modulo $n$).

If $x$ is a real number, then $[x]$ denotes its integral part.

By $\# A$ we denote the number of elements of a finite set $A$. Let $(a,b)$ denote the greatest common divisor of integers $a$ and $b$.

To ease notation, we let $<P_{1}>$ denote the set of integers composed only of primes congruent to $1$ (mod $4$). Similarly let $<P_{3}>$ those composed of primes congruent to $3$ (mod $4$).

\section{Setup}\label{SECTION.SETUP}

It is well-known that $n\in \mathcal{R}$ if and only if $n= 2^{\alpha} m$, where $\alpha\in \{0,1\}$ and $m\in <P_{1}>$ (see, for example, \cite[Ch. VI]{Davenport}).

 It is clear that $a \nmid b$ requires that $b \neq 0$. Also, we observe that $\delta \in (0,1/2)$ satisfies $\delta< C(\rho)$ if and only if
\begin{equation}\label{C.RHO.INEQUALITY}
\frac{6\cdot 10^{2\delta}}{\log (1/ (2\delta))}<\rho.
\end{equation}

For a fixed $\delta\in (10^{-6}, C(1/2))$, we define
\begin{equation}\label{SETUP:Def.y}
y= [x(\log x)^{\delta}]\qquad\text{and}\qquad z=\frac{y\log\log x}{(\log x)^{1/2}}.
\end{equation} It is clear that $\log y \sim \log x$ and $\log z \sim \log x$.

By $q$ we will always denote a prime such that $q> a+|b|$ and $q\equiv 3$ (mod $4$). Let $c_{q}$ be a solution of the congruence $a c_{q} + b \equiv 0 $ (mod $q$). We note that this solution exists, unique and $c_{q}\not \equiv 0$ (mod $q$), since $(a,q)=1$ and $(b,q)=1$. We assume that $c_{q}\in [1,q-1]$. Put
\[
I_{q}= \{0, c_{q}\}.
\]

We define
\begin{gather*}
P(x):= \prod_{q\leq x}q,\quad\qquad \sigma (x):= \prod_{q\leq x}\bigg(1-\frac{2}{q}\bigg),\\
 P (z,x):=\prod_{z<q\leq x}q,\quad\qquad \sigma (z,x):= \prod_{z<q\leq x}\bigg(1-\frac{2}{q}\bigg).
\end{gather*} By the Prime Number Theorem, we have
\begin{equation}\label{SETUP:P.x.EST}
P (x)\leq \prod_{p\leq x} p =\textup{exp}((1+o(1))x),
\end{equation}and, by the Prime Number Theorem for arithmetic progressions, we obtain
\begin{align}\label{Density.asymp}
\sigma (x)&= \prod_{q\leq x} \bigg(1- \frac{2}{q}\bigg)=\prod_{\substack{a+|b|< p\leq x\\ p\equiv 3\,\text{(mod $4$)}}} \bigg(1- \frac{2}{p}\bigg)\notag\\
&= \prod_{\substack{p\leq a+|b|\\ p\equiv 3\,\text{(mod $4$)}}} \bigg(1- \frac{2}{p}\bigg)^{-1}
\prod_{\substack{p\leq x\\ p\equiv 3\,\text{(mod $4$)}}} \bigg(1- \frac{2}{p}\bigg)\sim \frac{C}{\log x},
\end{align} where $C=C(a,b)>0$ is a constant depending only on $a$ and $b$.

 For any integer $d$, we define
\begin{align*}
S_{x}(d)&:= \{n\in \mathbb{Z}:\ n-d \not \equiv \alpha\text{ (mod $p$) for any $\alpha\in I_{q}$ for any $q\leq x$}\},\\
S_{z,x}(d)&:= \{n\in \mathbb{Z}:\ n-d \not \equiv \alpha\text{ (mod $p$) for any $\alpha \in I_{q}$ for any $z<q\leq x$}\}.
\end{align*} We denote $S_x:= S_{x}(0)$. It is clear that $S_{x}(d)= S_{x}+ d$. Here $S_{x}+d:= \{s+d:\ s\in S_{x}\}$.

Let $\xi > 1$. We set
\[
\mathfrak{H}:=\bigg\{H\in\{1, \xi, \xi^{2},\ldots\}:\ \frac{2y}{x}\leq H\leq \frac{y}{\xi z}\bigg\}
\]and
\[
\mathfrak{H}'=\{H\in \mathfrak{H}: H=\xi^{j},\ j\text{ is even}\},\qquad \mathfrak{H}''=\{H\in \mathfrak{H}: H=\xi^{j},\ j\text{ is odd}\}.
\]It is clear that
\[
\bigsqcup_{H\in\mathfrak{H}}\Big(\frac{y}{\xi H}, \frac{y}{H}\Big]\subset \Big(z, \frac{x}{2}\Big],
\]and
\begin{equation}\label{H_range}
(\log x)^{\delta}\leq H\leq \frac{(\log x)^{1/2}}{\log\log x}\quad \qquad (H\in \mathfrak{H}).
\end{equation} For each $H\in \mathfrak{H}$, let
 \[
 \mathcal{Q}_{H}=\Big\{q: \frac{y}{\xi H}< q \leq \frac{y}{H}\Big\}.
 \]We have
\begin{equation}\label{QH.NU.ASYMPT}
\# \mathcal{Q}_{H}\sim \Big(1-\frac{1}{\xi}\Big)\frac{y}{2H\log x}.
\end{equation}Also, let
\[
\mathcal{Q}' = \bigcup_{H\in \mathfrak{H}'}\mathcal{Q}_{H},\qquad \mathcal{Q}'' = \bigcup_{H\in \mathfrak{H}''}\mathcal{Q}_{H},\qquad\text{and}\qquad \mathcal{Q}=\mathcal{Q}'\cup \mathcal{Q}''.
\]

We note that $\mathcal{Q}'\cap \mathcal{Q}''=\emptyset$, and if $q\in \mathcal{Q}$, then $q\in (z, x/2]$. It is clear that
\begin{equation}\label{SETUP:Q.EST}
\#\mathcal{Q}\leq \#\Big\{p: p \leq \frac{x}{2}\Big\}\leq \frac{x}{\log x}<y.
\end{equation} For $q\in \mathcal{Q}$, let $H_{q}$ be the unique element of $\mathfrak{H}$ such that
\[
\frac{y}{\xi H_{q}}< q\leq \frac{y}{H_{q}}.
\]

We denote by $\mathbf{d}$ a random residue class from $\mathbb{Z}/P\mathbb{Z}$, chosen with uniform probability, where we adopt the abbreviations
\[
P=P (z),\qquad \sigma=\sigma (z),\qquad \mathbf{S}'= S_{z}(\mathbf{d}),\qquad \mathbf{S}''= S_{z}(-N -\mathbf{d}).
\]Let
\[
6<M\leq 7.
\]For $H\in\mathfrak{H}$, we use the notation
\[
P_{1}=P (H^{M}),\qquad \sigma_{1}=\sigma (H^{M}),\qquad P_{2}=P (H^{M}, z),\qquad \sigma_{2}=\sigma (H^{M}, z),
\]and define
\begin{gather}
\mathbf{d}_{1}\equiv \mathbf{d}\ \text{(mod $P_{1}$)},\qquad
\mathbf{S}'_{1}= S_{H^{M}}(\mathbf{d}_{1}),\qquad \mathbf{S}''_{1}= S_{H^{M}}(-N-\mathbf{d}_{1}),\label{DECOMP.1}\\
\mathbf{d}_{2}\equiv \mathbf{d}\ \text{(mod $P_{2}$)},\qquad \mathbf{S}'_{2}= S_{H^{M}, z}(\mathbf{d}_{2}),
\qquad \mathbf{S}''_{2}= S_{H^{M}, z}(-N -\mathbf{d}_{2}),\label{DECOMP.2}
\end{gather}
with the convention that $\mathbf{d}_{1}\in \mathbb{Z}/P_{1}\mathbb{Z}$ and $\mathbf{d}_{2}\in \mathbb{Z}/P_{2}\mathbb{Z}$. Thus, $\mathbf{d_1}$ and $\mathbf{d_{2}}$ are each uniformly distributed, are independent of each other, and likewise $\mathbf{S}'_{1}$ and $\mathbf{S}'_{2}$ are independent, and $\mathbf{S}''_{1}$ and $\mathbf{S}''_{2}$ are independent. We also have the obvious relations
\begin{equation}\label{DECOMP.FINAL}
P=P_1 P_2,\qquad \sigma=\sigma_1\sigma_2,\qquad \mathbf{S}'=\mathbf{S}'_{1}\cap \mathbf{S}'_{2},\qquad \mathbf{S}''=\mathbf{S}''_{1}\cap \mathbf{S}''_{2}.
\end{equation} From \eqref{Density.asymp} we have
\begin{equation}\label{SIGMA.2}
\sigma_{2}^{-1}=\prod_{H^{M}< q \leq z}\bigg(1-\frac{2}{q}\bigg)^{-1}\sim \frac{\log z}{M\log H}\sim \frac{\log x}{M\log H}.
\end{equation} Therefore, for sufficiently large $x$, we have
\[
\sigma_{2}^{-1}\leq \frac{2\log x}{M \log H}\leq \frac{2 \log x}{M\delta \log\log x}.
\]

 For $q\in \mathcal{Q}$, let $\alpha_{q,1} = 0$ and $\alpha_{q,2} = c_{q}$. Then
\[
I_{q}=\{\alpha_{q, i}:  i=1,2 \}
\]and $\alpha_{q,i}\in [0,q-1]$. For $n\in \mathbb{Z}$, we set
\begin{align*}
\mathbf{AP}'(J; q, n)&:=\bigg(\bigsqcup_{i=1}^{2}\{n+\alpha_{q,i}+qh:\ 1\leq h \leq J\}\bigg)\cap \mathbf{S}'_{1},\\
\mathbf{AP}''(J; q, n)&:=\bigg(\bigsqcup_{i=1}^{2}\{n+\alpha_{q,i}-qh:\ 1\leq h \leq J\}\bigg)\cap \mathbf{S}''_{1}.
\end{align*} Let
\begin{equation}\label{Def_lambda}
\bm{\lambda}'(H; q, n)= \frac{1_{\mathbf{AP}'(KH; q, n)\subset \mathbf{S}'_{2}}}{\sigma_{2}^{\# \mathbf{AP}'(KH; q, n)}}\qquad\text{and}\qquad    \bm{\lambda}''(H; q, n)= \frac{1_{\mathbf{AP}''(KH; q, n)\subset \mathbf{S}''_{2}}}{\sigma_{2}^{\# \mathbf{AP}''(KH; q, n)}}.
\end{equation}

Suppose that $n\in [1,y]$, $q\in \mathcal{Q}'$, $i\in \{1,2\}$, and $1 \leq h \leq K H_{q}$. Then $n- \alpha_{q, i} - qh \leq y$ and
\[
n- \alpha_{q, i} - qh \geq 1 - (q-1) - q K H_{q}\geq 2- q - Ky > - (K+1)y.
\]Hence,
\begin{equation}\label{n.qh.range.1}
- (K+1)y < n- \alpha_{q, i} - qh\leq y.
\end{equation}Similarly, let $n\in [-y,-1]$, $q\in \mathcal{Q}''$, $i \in\{1,2\}$, and $1 \leq h \leq K H_{q}$. Then
\[
n - \alpha_{q,i}+ qh \geq -y- (q-1) +q > - y,
\]and
\[
n - \alpha_{q,i}+ qh \leq - 1 + q K H_{q} \leq - 1 + Ky < (K+1)y.
\]Thus, we obtain
\begin{equation}\label{n.qh.range.2}
-y \leq n - \alpha_{q,i}+ qh < (K+1)y.
\end{equation}

\section{Main Propositions}

\begin{proposition}\label{P1}
Let $10^{-6}< \delta < C(1/2)$. Then there exist constants $6<M<7$, $\xi>1$, $K>0$, $0 < \varepsilon < (M-6)/7$ such that for sufficiently large $x$ (with respect to $\delta$, $M$, $\xi$, $K$, and $\varepsilon$) there exist an integer $d$ \textup{mod $P(z)$} (a choice of $\mathbf{d}$) and non-empty sets $\mathcal{T}'\subseteq \mathcal{Q}'$ and $\mathcal{T}''\subseteq \mathcal{Q}''$ such that the following statements hold.

\textup{(i)} One has
\begin{equation}\label{S.EST}
\#(S'\cap [1, y])\leq 2\sigma y\qquad\text{and}\qquad \#(S''\cap [-y, -1])\leq 2\sigma y.
\end{equation}

\textup{(ii)} For all $q\in \mathcal{T}'$ one has
\begin{equation}\label{L1:lambda.1}
\sum_{-(K+1)y< n \leq y} \lambda ' (H_{q}; q, n) = \left(1+O\left(\frac{1}{(\log x)^{\delta (1+\varepsilon)}}\right)\right)(K+2)y.
\end{equation}

For all $q\in \mathcal{T}''$ one has
\begin{equation}\label{L1:lambda.2.N}
\sum_{-y\leq n < (K+1) y} \lambda ''(H_{q}; q, n) = \left(1+O\left(\frac{1}{(\log x)^{\delta (1+\varepsilon)}}\right)\right)(K+2)y.
\end{equation}

\textup{(iii)} For each $i\in\{1,2\}$, there exists a set $V'_{i}\subset (S '\cap [1,y])$ such that
\begin{equation}\label{V.1.DOPOLNENIYE}
\# ((S'\cap [1, y]) \setminus V'_{i}) \leq \frac{x}{100 \log x}
\end{equation}and for any $n\in V'_{ i}$, one has
\begin{equation}\label{L1:lambda.1.struya}
\sum_{q\in \mathcal{T}'} \sum_{h\leq K H_{q}}\lambda ' (H_{q}; q, n-\alpha_{q,i}-qh)=\bigg(C' + O\bigg(\frac{1}{(\log x)^{\delta (1+\varepsilon)}}\bigg)\bigg) (K+2)y
\end{equation}for some quantity $C'$ independent of $n$ and $i$ with
\begin{equation}\label{C.1.DIAPAZON}
 \frac{10 ^{2\delta}}{2} \leq C' \leq 50 .
\end{equation}
\textup{(iv)} For each $i\in\{1,2\}$, there exists a set $V''_{i}\subset (S ''\cap [-y,-1])$ such that
\begin{equation}\label{V.2.DOPOLNENIYE}
\# ((S''\cap [-y, -1]) \setminus V''_{ i}) \leq \frac{x}{100 \log x}
\end{equation}and for any $n\in V''_{ i}$, one has
\begin{equation}\label{L1:lambda.2.struya}
\sum_{q\in \mathcal{T}''} \sum_{h\leq K H_{q}}\lambda '' (H_{q}; q, n-\alpha_{q,i}+qh)=\bigg(C'' + O\bigg(\frac{1}{(\log x)^{\delta (1+\varepsilon)}}\bigg)\bigg) (K+2)y
\end{equation}for some quantity $C''$ independent of $n$ and $i$ with
\[
 \frac{10 ^{2\delta}}{2} \leq C'' \leq 50 .
\]
\end{proposition}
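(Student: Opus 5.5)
We follow the probabilistic scheme of Ford--Konyagin--Maynard--Pomerance--Tao and Ford--Gabdullin. Everything is computed as an expectation over the random residue $\mathbf{d}$, and we then select a good $d$ by Markov/Chebyshev. Three ingredients are needed.

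\textbf{(a) Correlation estimates for $\mathbf{S}'_2$, $\mathbf{S}''_2$.} For a fixed finite set $\mathcal{A}$ of integers of size $\ll (\log x)^{1/2+o(1)}$, contained in an interval of length $\ll y$, we want
\[
\mathbb{P}(\mathcal{A}\subset \mathbf{S}'_2)=\sigma_2^{\#\mathcal{A}}\bigl(1+O((\log x)^{-\delta(1+\varepsilon)})\bigr).
\]
This holds up to an exceptional contribution coming from pairs of elements whose difference is divisible by many primes in $(H^M,z]$. We treat that contribution by summing over differences and using that a number $\le 2Ky$ has $O(\log x/\log H^M)$ prime factors exceeding $H^M$. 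The condition $M>6$ and $\varepsilon<(M-6)/7$ are exactly what make this error $\ll H^{-M+6+7\varepsilon}$, hence $\ll (\log x)^{-\delta(1+\varepsilon)}$ by \eqref{H_range}. Since $\mathbf{S}'_1$ and $\mathbf{S}'_2$ are independent, the same estimates hold with $\mathbf{S}'_1$ included, and with $\sigma_1$ in place of $\sigma_2$.

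\textbf{(b) First and second moments.} Using (a) and \eqref{DECOMP.FINAL}:
\begin{itemize}
\item $\mathbb{E}\,\#(\mathbf{S}'\cap[1,y])=\sigma y$.
\item $\mathbb{E}\,\bm\lambda'(H;q,n)=1+O((\log x)^{-\delta(1+\varepsilon)})$ for each $n$, so the sum in (ii) has expectation $(K+2)y(1+O(\cdot))$.
\item The second moments of these sums agree with the squares of their means up to the same relative error, by applying (a) to the union of two progressions.
\end{itemize}
For (iii) we consider, for $n\in[1,y]$,
\[
\mathbf{X}_i(n)=1_{n\in\mathbf{S}'}\sum_{q\in\mathcal Q'}\sum_{h\le KH_q}\bm\lambda'(H_q;q,n-\alpha_{q,i}-qh).
\]
We compute $\mathbb{E}\,\mathbf{X}_i(n)$ and $\mathbb{E}\,\mathbf{X}_i(n)^2$ as $\sigma$ times deterministic quantities. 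The mean is
\[
\sigma\sum_{H\in\mathfrak H'}\#\mathcal{Q}_H\cdot KH\cdot\frac{1}{\sigma_2 ^{\cdots}}
\]
adjusted by the conditioning on $n\in\mathbf{S}'_2$. This produces a factor $\sigma_2^{-1}\asymp \log x/(M\log H)$ coming from $n$ lying in the progression. Combining with \eqref{QH.NU.ASYMPT} and summing over $H\in\mathfrak H'$ (every other power of $\xi$), we get
\[
C'\approx \frac{K}{2(K+2)}\Bigl(1-\frac1\xi\Bigr)\sum_{H\in\mathfrak{H}'}\frac{1}{M\log H}\cdot\frac{\log x}{\cdots}\approx \frac{1}{2M}\cdot\frac{1}{2}\log\frac{1/2}{\delta}\cdot\frac{\xi-1}{\xi\log\xi}\,\frac{K}{K+2}.
\]
The parameters $M\to6$, $\xi\to1$, $K\to\infty$ are chosen so that the hypothesis \eqref{C.RHO.INEQUALITY} with $\rho=1/2$ gives $C'\ge 10^{2\delta}/2$; this is where the factor $6$ and the halving from $\mathfrak{H}'$ versus $\mathfrak{H}''$ enter. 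The upper bound $C'\le 50$ is crude. By symmetry ($n\mapsto -N-n$, and $\mathbf{S}''$ is the reflected sieve), the $''$ quantities have the same expectations, and $C''=C'$.

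\textbf{(c) Selecting $d$ and the sets $\mathcal T',\mathcal T''$, $V'_i,V''_i$.} By Chebyshev, with probability $1-o(1)$ all of the following hold:
\begin{itemize}
\item (i) holds, via Markov applied to the first moment.
\item All but an $o(1)$ proportion of $q\in\mathcal{Q}'$ (weighted appropriately) satisfy (ii), via the second-moment estimate summed over $q$ together with Markov. We let $\mathcal{T}'$ be the set of these good $q$.
\item $\sum_n(\mathbf{X}_i(n)-C'(K+2)y1_{n\in\mathbf{S}'})^2$ is small. This holds even after $\mathcal Q'$ is replaced by $\mathcal T'$, since the removed $q$ contribute little in expectation.
\end{itemize}
The $n$ violating \eqref{L1:lambda.1.struya} are then few, at most $x/(100\log x)$, because $\sigma y\asymp y/\log x$ and the exceptional proportion is $o(1)$. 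We take $V'_i$ to be the remaining $n$. The same argument applied jointly to the $''$ objects, using a union bound over the events, yields one $d$ that works for everything.

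\textbf{Main obstacle.} The hardest step is (a), specifically controlling the large-sieve-type error for pairs of progressions with many shared prime divisors in differences uniformly over $H\in\mathfrak{H}$. It is this step that forces the constraint $M>6$, which in turn limits $\delta$ to be below $C(1/2)$.
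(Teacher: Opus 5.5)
Your overall architecture is the paper's: moments over the random $\mathbf{d}$, then Chebyshev/Markov, then fixing one $d$. Your asymptotic for $C'$ also agrees with the paper's $\frac{K}{4(K+2)M}\frac{1-1/\xi}{\log\xi}\log\frac{1}{2\delta}$. The gap is quantitative, and it sits where the proof has its content.

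\textbf{The exceptional proportion must be tiny, not $o(1)$.} The exceptional set in (iii) must have size at most $x/(100\log x)$. But $\#(S'\cap[1,y])\asymp\sigma y\asymp x(\log x)^{\delta-1}$, since $y=[x(\log x)^{\delta}]$. So the exceptional proportion must be $O((\log x)^{-\delta})$, not merely $o(1)$ as you claim; this factor $(\log x)^{\delta}$ is the entire gain of the construction. You also need precision $(\log x)^{-\delta(1+\varepsilon)}$. Together these force a Chebyshev argument to have second moments matching the squared means up to relative error smaller than about $(\log x)^{-\delta(3+2\varepsilon)}$.

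Your step (a) only claims relative error $(\log x)^{-\delta(1+\varepsilon)}$, which is useless here. Even that claim is miscomputed: since $M-6-7\varepsilon<1$, the bound $H^{-(M-6-7\varepsilon)}$ is not $\ll(\log x)^{-\delta(1+\varepsilon)}$.

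The paper obtains relative error $O(\log H/H^{M-2})$ (Proposition \ref{P2}) and works scale by scale. It uses thresholds $H^{-1-\varepsilon}$ and exceptional sets $\bm{\mathcal{E}}'_{H,i}$, $\bm{\mathcal{W}}'_{H,i}$ of size $\le\sigma yH^{-1-\varepsilon}$, and then $\sum_{H}H^{-1-\varepsilon}\ll(\log x)^{-\delta(1+\varepsilon)}$.

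\textbf{Removing bad primes needs an argument.} The claim that ``the removed $q$ contribute little in expectation'' is not automatic. The set $\bm{\mathcal{T}}'_H$ depends on $\mathbf{d}$, and $\bm{\lambda}'$ can be as large as $\sigma_2^{-2KH}$. The paper therefore uses Cauchy--Schwarz against the second moment to get $\mathbb{E}\#\bm{\mathcal{W}}'_{H,i}\ll\sigma yH^{-(M-5-5\varepsilon)}$. This bookkeeping, not the correlation estimate, is where $M>6$ and $\varepsilon<(M-6)/7$ are used.

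\textbf{The missing idea: how to reach $O(\log H/H^{M-2})$.} The error term in Lemma \ref{L1.PREP} is $H^{-2}\sum E_{64K^2H^2}(\alpha+q(h-h');H)$, and it cannot be controlled for an individual $q$. So ``$\mathbb{E}\,\bm{\lambda}'(H;q,n)=1+O(\cdot)$ for each $n$'' is not available without averaging over $q$. Your proposed tool is that an integer of size $\ll y$ has $O(\log x/\log H^M)$ prime factors above $H^M$. That only gives $E_{64K^2H^2}\le\exp(O(H^{2-M}\log x/\log H))$, which is enormous when $H\approx(\log x)^{\delta}$.

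The paper instead averages over $q\in\mathcal{Q}_H$. Lemma \ref{L3:Prep} uses divisor-function sums of $F(n)=n(an+b)(-an+b)$, which is nonzero for $n\neq0$ precisely because $a\nmid b$. It shows that only $\ll2^{-k/3}u(\log u)^2$ primes $q$ have $N(m_q+w)=k$, and this yields Lemma \ref{L4:PREP}.

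\textbf{Part (i).} This does not follow from Markov on the first moment. Markov bounds each of the two failure probabilities only by $1/2$, so no union bound with the other events is possible. You need the variance estimate of Proposition \ref{P2}(i).
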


\begin{proposition}\label{P2}
The following statements hold.

\textup{(i)} One has
\begin{align}
\mathbb{E} \#(\mathbf{S}'\cap [1,y])&=\sigma y,\qquad\quad\quad\ \
\mathbb{E} \big(\#(\mathbf{S}'\cap [1,y])\big)^{2}= \bigg(1+ O\bigg(\frac{1}{\log y}\bigg)\bigg)(\sigma y)^{2},\label{S.1.OSNOVA}\\
\mathbb{E} \#(\mathbf{S}''\cap [-y,-1])&=\sigma y,\qquad\quad
\mathbb{E} \big(\#(\mathbf{S}''\cap [-y,-1])\big)^{2}= \bigg(1+ O\bigg(\frac{1}{\log y}\bigg)\bigg)(\sigma y)^{2}.\label{S.2.OSNOVA}
\end{align}

\textup{(ii)} For every $H\in \mathfrak{H}'$ and $j\in \{1, 2\}$, we have
\begin{equation}\label{L1:lambda}
\mathbb{E}\sum_{q\in \mathcal{Q}_{H}} \Bigg(\sum_{-(K+1)y< n\leq y}\bm{\lambda}'(H; q, n)\Bigg)^{j}= \bigg(1 +
 O\bigg(\frac{\log H}{H^{M-2}}\bigg)\bigg) \big((K+2)y\big)^{j}\# \mathcal{Q}_{H}.
\end{equation}For every $H\in \mathfrak{H}''$ and $j\in \{1, 2\}$, we have
\begin{equation}\label{L1:lambda.2}
\mathbb{E}\sum_{q\in \mathcal{Q}_{H}} \Bigg(\sum_{-y\leq n< (K+1)y}\bm{\lambda}''(H; q, n)\Bigg)^{j}= \bigg(1 +
 O\bigg(\frac{\log H}{H^{M-2}}\bigg)\bigg) \big((K+2)y\big)^{j}\# \mathcal{Q}_{H}.
\end{equation}

\textup{(iii)} For every $H\in \mathfrak{H}'$, $i\in \{1, 2\}$, and $j\in \{1,2\}$, we have
\begin{align}\label{L3:lamda.AP.1}
\mathbb{E}\sum_{n\in \mathbf{S}'\cap [1, y]} \bigg(\sum_{q\in \mathcal{Q}_{H}} \sum_{h\leq KH}\bm{\lambda}'(H; q, &n-\alpha_{q, i}-qh)\bigg)^{j}\notag\\
 &= \bigg(1 + O\bigg(\frac{\log H}{H^{M-2}}\bigg)\bigg) \bigg(\frac{\#\mathcal{Q}_{H}[KH]}{\sigma_{2}}\bigg)^{j}\sigma y.
\end{align}

For every $H\in \mathfrak{H}''$, $i\in \{1, 2\}$, and $j\in \{1,2\}$, we have
\begin{align}\label{L3:lamda.AP.2}
\mathbb{E}\sum_{n\in \mathbf{S}''\cap [-y, -1]} \bigg(\sum_{q\in \mathcal{Q}_{H}} \sum_{h\leq KH}\bm{\lambda}''&(H; q, n-\alpha_{q, i}+qh)\bigg)^{j}\notag\\
 &= \bigg(1 + O\bigg(\frac{\log H}{H^{M-2}}\bigg)\bigg) \bigg(\frac{\#\mathcal{Q}_{H}[KH]}{\sigma_{2}}\bigg)^{j}\sigma y.
\end{align}

\end{proposition}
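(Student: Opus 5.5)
The plan is to follow the moment computations of Ford--Konyagin--Maynard--Pomerance--Tao and Ford--Gabdullin. Everything rests on the Chinese remainder theorem: since $\mathbf{d}$ is uniform mod $P$, the events ``$n-\mathbf{d}\not\equiv\alpha \pmod q$ for all $\alpha\in I_q$'' are independent across distinct $q$. So for any finite set $A\subset\mathbb{Z}$,
\[
\mathbb{P}(A\subset \mathbf{S}')=\prod_{q\le z}\Big(1-\frac{\#(A+I_q \bmod q)}{q}\Big),
\]
where $A+I_q=\{a-\alpha:\ a\in A,\ \alpha\in I_q\}$. The same formula holds for $\mathbf{S}''$, for $\mathbf{S}'_1$ with $q\le H^M$, and for $\mathbf{S}'_2$ with $H^M<q\le z$. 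This product equals $\sigma^{\#A}$ exactly unless two elements of $A$ collide modulo some $q$, that is, unless $q$ divides a difference $a-a'$ or $a-a'\pm c_q$.

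\emph{Part (i).} The first moment is exact: $\mathbb{P}(n\in\mathbf{S}')=\sigma$ for every $n$. For the second moment, the pair contribution for $n_1\ne n_2$ is $\sigma^2\mathfrak{S}(n_1-n_2)$, where $\mathfrak{S}(h)$ is a product over primes $q\le z$ that differ from $1$ only through the factors with $q\mid h(h\pm c_q)$. I will show $\sum_{0<|h|<y}(\mathfrak{S}(h)-1)\ll y/\log y$ by the standard Gallagher-type averaging: expand $\mathfrak{S}(h)$ multiplicatively, and note that only squarefree moduli whose prime factors are at most $z$ and whose product is much smaller than $y$ contribute non-negligibly. Combined with the diagonal term $\sigma y=o((\sigma y)^2/\log y)$, this gives \eqref{S.1.OSNOVA}, and \eqref{S.2.OSNOVA} follows identically.

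\emph{Part (ii).} I will condition on $\mathbf{d}_1$, which fixes the set $\mathbf{AP}'(KH;q,n)$, say of size $k$; its expected value is about $2KH\sigma_1$. Since $\mathbf{d}_2$ is independent of $\mathbf{d}_1$, the product formula gives
\[
\mathbb{E}\big(1_{\mathbf{AP}'\subset\mathbf{S}'_2}\mid\mathbf{d}_1\big)=\sigma_2^{k}\,(1+E),
\qquad E\ll \sum_{\substack{H^M<q'\le z\\ q'\mid \Delta}}\frac{1}{q'},
\]
where $\Delta$ runs over the $O(H^2)$ differences $a-a'$ and $a-a'\pm c_{q'}$ with $a,a'$ in the progression. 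Hence $\mathbb{E}\bm\lambda'=1+O(E)$. Rather than bounding $E$ pointwise, which costs a factor $\nu(\Delta)$, I will average over $n$ and over $q\in\mathcal{Q}_H$. The averaged probability that $q'\mid\Delta$ is $\ll 1/q'$, so the total error is
\[
\ll H^2\sum_{q'>H^M}\frac{1}{q'^2}\ll \frac{H^2}{H^M\log H},
\]
which fits within $O(\log H/H^{M-2})$. Summing over the $(K+2)y$ values of $n$ and over $q$ gives the case $j=1$ of \eqref{L1:lambda}.

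For $j=2$, I will expand the square as a double sum over $n,n'$. The pair contributes
\[
\frac{\mathbb{P}\big(\mathbf{AP}'_n\cup\mathbf{AP}'_{n'}\subset\mathbf{S}'_2\big)}{\sigma_2^{\#\mathbf{AP}'_n+\#\mathbf{AP}'_{n'}}}.
\]
When the two progressions are disjoint and collision-free this ratio is $1+O(\cdot)$. When $n-n'$ lies in $q\mathbb{Z}+\{0,\pm c_q\}$ up to distance $KHq$, the progressions overlap, but there are only $O(Hy)$ such pairs, each contributing at most $\sigma_2^{-O(H)}$; using $\sigma_2^{-1}\ll \log x/\log H$ and the sieving by $\mathbf{S}'_1$, their total is negligible relative to $y^2$. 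The case of $\bm\lambda''$ in \eqref{L1:lambda.2} is symmetric.

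\emph{Part (iii).} I will swap the order of summation and write the $j$-th power as a sum over $j$-tuples $(q_l,h_l)$. The key observation is that $n=(n-\alpha_{q,i}-qh)+\alpha_{q,i}+qh$ lies in every progression $\mathbf{AP}'(KH;q,n-\alpha_{q,i}-qh)$ whenever $n\in\mathbf{S}'_1$. So the event $n\in\mathbf{S}'$ reduces to $n\in\mathbf{S}'_1$, which has probability $\sigma_1$, while the $\mathbf{S}'_2$-condition is absorbed into the $\bm\lambda'$ indicator. For $j=1$ the computation from part (ii) then gives $\sigma_1\sigma_2^{-1}\cdot\#\mathcal{Q}_H[KH]$ per $n$ (up to $1+O(\log H/H^{M-2})$), where I use that $n$ is one element of the progression to convert one factor $\sigma_2^{-1}$ into $\sigma_2$ and so reach $\sigma y$ overall. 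For $j=2$ I must handle two progressions with different moduli $q_1\ne q_2$ that both pass through $n$. Because $q_1,q_2>y/(\xi H)$ and each progression has length at most $KH$, they share only $O(1)$ elements, namely $n$ itself and possibly a few others. This gives the main term $(\#\mathcal{Q}_H[KH]/\sigma_2)^2\sigma y$, and the terms with $q_1=q_2$ are lower order.

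I expect the main obstacle to be this $j=2$ correlation analysis in (ii) and (iii): controlling the extra intersections, and the collisions modulo primes in $(H^M,z]$ between two different progressions, uniformly enough that after averaging over $q$ and $n$ the error stays within $O(\log H/H^{M-2})$. My approach will be to bound the number of $(q_1,h_1,q_2,h_2)$ producing a coincidence of the form $q_1h_1-q_2h_2\in\{0,\pm c_{q_1},\pm c_{q_2},\dots\}$ by divisor-counting, and then to use the sieving by $\mathbf{S}'_1$ to absorb the factors of $\sigma_2^{-1}$.
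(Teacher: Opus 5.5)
There is a genuine gap in how you control the collision factor from the sieving primes $q'\in(H^M,z]$, and it affects all of (ii) and (iii).

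Conditional on $\mathbf{d}_1$, the ratio $\mathbb{P}(\mathcal{U}\subset\mathbf{S}'_2)/\sigma_2^{\#\mathcal{U}}$ is, up to a factor $1+O(H^2/H^M)$, a \emph{product} $\prod_{q'}\Delta_{q'}$. Here $\Delta_{q'}=1+O(c(q')/q')$, where $c(q')$ is the number of colliding pairs modulo $q'$, which can be of size $\asymp H$.
\begin{itemize}
\item The correction is therefore only bounded by $\exp\bigl(O(\sum_{q'}c(q')/q')\bigr)-1$, not by $\sum_{q'\mid\Delta}1/q'$.
\item For a given $q\in\mathcal{Q}_H$ the exponent can a priori be as large as $H^2\log y/(H^M\log H)$. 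This is unbounded for $H$ near $(\log x)^{\delta}$, where $H^{M-2}\approx(\log x)^{4\delta}$ is far below $\log y$.
\item Your first-moment fact, that $q'$ divides a given difference for a proportion $\ll 1/q'$ of the $q$'s, controls only the linearized term. It says nothing about $\mathbb{E}_q\exp(S_q)$.
\item Truncating at $S_q\le 1$ and applying Markov with that first moment does not help either. An exceptional $q$ can carry weight $\sigma_2^{-2KH}=e^{O(H\log\log x)}$, while Markov only makes such $q$ polynomially rare in $H$.
\item Averaging over $n$ is vacuous here. The within-progression differences are $\alpha+q(h-h')$ with $\alpha\in I_q-I_q$, which do not depend on $n$. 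Only the average over the primes $q$ is available, and you have no equidistribution of primes $q\le y/H$ modulo squarefree products of many $q'$ exceeding $y$.
\end{itemize}

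The missing ingredient is an exponential-moment (large-deviation) bound for these collisions. The paper first uses AM--GM (Lemma~\ref{L1.PREP}) to bound the product by an average over pairs $v,v'$ of $1+E_{3l^2}(v-v';H)$, an Euler-product weight with $A\asymp H^2$. It then treats two cases:
\begin{itemize}
\item \emph{Difference runs over an interval.} This covers part (i) and the cross term $R_2$ in (ii) with $j=2$. Lemma~\ref{L2:PREP} handles it via residue counts $X/\varphi(n)+R$. This is why your Gallagher argument for (i) is fine: there $A=O(1)$ and $h$ ranges over an interval.
\item \emph{Variable is the prime $q$.} This covers the $j=1$ terms, the within-progression terms, and $R_4$--$R_7$ in (iii). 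Lemma~\ref{L4:PREP} splits according to $N(m_q+w)=k$. By Lemma~\ref{L3:Prep}, the number of such $q$ is $\ll 2^{-k/3}u(\log u)^2$. The reason is that each value $m=m_q+w$ arises from $O(\log u)$ primes, since $q\mid F(m-w)\neq 0$ (this is where $a\nmid b$ is used), and $\#\{m:\tau(|F_i(m)|)\ge 2^{k/3}\}\le 2^{-k/3}\sum\tau$.
\end{itemize}
That exponential decay in $k$ beats the growth $e^{Ak/H^M}=e^{64K^2k/H^{M-2}}$. Your ``divisor-counting'' idea is the right tool, but you aim it only at exact coincidences between progressions. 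Unless you apply it, or an equivalent exponential-moment bound, to the mod-$q'$ collision weights, the $O(\log H/H^{M-2})$ errors in (ii) and (iii) are not established.
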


The scheme of proof of Theorem \ref{T1} will be as follows. We will deduce Theorem \ref{T1} from Proposition \ref{P1}. After that we will deduce Proposition \ref{P1} from Proposition \ref{P2}. The proof of Proposition \ref{P2} will be given in the last sections.

\section{Deduction of Theorem \ref{T1} from Proposition \ref{P1}}\label{SECTION.DEDUCE}

We will use the following hypergraph covering lemma.
\begin{lemma}\label{L2}
Suppose that $0<\delta \leq 1/2$ and $K_{0}\geq 1$, and let $y\geq y_{0}(\delta, K_{0})$ with $y_{0}(\delta, K_{0})$ sufficiently large, and let $V$ be a finite set with $\#V\leq y$. Let $1\leq s \leq y$, and suppose that $\mathbf{e}_{1},\ldots, \mathbf{e}_{s}$ are random subsets of $V$ satisfying the following:
\begin{align}
\#\mathbf{e}_{i}&\leq \frac{K_{0}(\log y)^{1/2}}{\log\log y}\qquad (1\leq i\leq s),\label{L3:I}\\
\mathbb{P}(v\in \mathbf{e}_{i})&\leq y^{-1/2 - 1/100}\qquad (v\in V,\ 1\leq i \leq s),\label{L3:II}\\
\sum_{i=1}^{s}\mathbb{P}(v, v' \in \mathbf{e}_{i})&\leq y^{-1/2}\qquad (v, v' \in V,\ v\neq v'),\label{L3:III}\\
\bigg|\sum_{i=1}^{s} \mathbb{P}(v\in \mathbf{e}_{i}) - C_1 \bigg|&\leq \eta\qquad (v\in V),\label{L3:IV}
\end{align}where $C_1$ and $\eta$ satisfy
\[
10^{2\delta}\leq C_1\leq 100,\qquad \eta \geq \frac{1}{(\log y)^{\delta}\log\log y}.
\]Then there are subsets $e_i$ of $V$, $1\leq i \leq s$, with $e_i$ being in the support of $\mathbf{e}_{i}$ for every $i$, and such that
\[
\# \bigg(V\setminus \bigcup_{i=1}^{s} e_i\bigg)\leq C_{0}\eta \#V,
\]where $C_0 >0$ is an absolute constant.
\end{lemma}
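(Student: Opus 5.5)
The plan is to deduce Lemma \ref{L2} from the hypergraph covering theorem of Ford, Green, Konyagin, Maynard and Tao (the form used in \cite{FKMPT}, Corollary 4, and in \cite{Ford.Gabdullin}). That result is proved by a Rödl-nibble / Pippenger--Spencer semi-random argument. I would first check that our hypotheses \eqref{L3:I}--\eqref{L3:IV} are exactly the ones it needs once the parameters are matched. These are: edge size $r\le K_0(\log y)^{1/2}/\log\log y$, small single-vertex probabilities, small codegrees, and near-constant degree $C_1$ up to error $\eta$. If they match, the lemma follows immediately. Since the cited statement is formulated for slightly different numerology, I would also keep a direct proof in reserve and check that the numerology goes through.

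\emph{The direct argument.} Fix a number $T$ of rounds, and use $\eta\ge (\log y)^{-\delta}/\log\log y$ to take $T\asymp \log(1/\eta)$. Partition $\{1,\dots,s\}$ uniformly at random into $T$ classes, so each class carries expected degree $C_1/T$ at every vertex, up to an error $\eta/T$. Run the rounds in order, maintaining the uncovered set $V_t$ with $V_0=V$. In round $t$, independently sample $\mathbf e_i$ for each $i$ in the $t$-th class, and delete from $V_t$ every vertex covered by some sampled edge; call the result $V_{t+1}$.
\begin{itemize}
\item By \eqref{L3:II} each individual probability is tiny. Hence inclusion--exclusion (Bonferroni), with the pairwise terms controlled by \eqref{L3:III}, gives that a surviving vertex $v$ remains uncovered in round $t$ with probability $\exp(-C_1/T+O(\eta/T)+O(y^{-1/100}))$.
\item The codegree bound \eqref{L3:III} and the edge size bound \eqref{L3:I} control the second moment of $\#V_{t+1}$: covariances between distinct vertices only arise through edges containing both.
\item Chebyshev then shows $\#V_{t+1}$ is concentrated around its mean.
\end{itemize}

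The essential point is that the \emph{conditional} degrees of the surviving vertices with respect to the later classes must still be nearly uniform. I would arrange this by conditioning on the random partition and requiring, via another second-moment computation, that for all but a negligible proportion of vertices the degree inside each class is $C_1/T+O(\eta/T)$. The exceptional vertices are discarded into the final error, contributing $O(\eta\#V)$.

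Iterating over the $T$ rounds gives $\#V_T\approx e^{-C_1}\#V$ plus accumulated errors $O(\eta\#V)$. This alone does not reach the target $O(\eta\#V)$, so a single pass is not enough. Instead, following \cite{FKMPT}, I would use the refined weighting in which edges are accepted with probabilities tuned stage by stage. This drives the survival density to $O(\eta)$, and it is here that $C_1\ge 10^{2\delta}$ and $\delta\le 1/2$ enter: they guarantee the needed decay within the available range of $\eta$. Finally, since some realization attains at most the expectation, we can select $e_i$ in the support of each $\mathbf e_i$ (taking $e_i$ arbitrary in the support for edges never selected). This gives the claimed bound.

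\emph{Main obstacle.} The hardest part is controlling how the errors accumulate over $T$ rounds when edges are as large as $(\log y)^{1/2}/\log\log y$. Each round multiplies the degree error by a factor depending on $r$, and we need the total to stay $O(\eta)$ even though $\eta$ can be as small as $(\log y)^{-\delta}$. I would first try to make the per-round relative error $O(r^2 y^{-1/100})$, using \eqref{L3:II}--\eqref{L3:III}, which is negligible against $\eta/T$. If that fails, I would fall back on quoting the FGKMT covering theorem verbatim, with $\eta$ as the error parameter.
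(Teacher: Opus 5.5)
The paper gives no argument for this lemma. The statement is copied verbatim from \cite[Lemma 3.1]{FKMPT}, and the proof is that one-line citation. (The ``Corollary~4'' you mention belongs to \cite{FGKMT.LARGE}; the covering statement in \cite{FKMPT} is Lemma 3.1 itself.)

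Your primary plan points in the right direction, but as written it does not close, because the result you name does not yield this lemma. The uniform-covering corollary of \cite{FGKMT.LARGE} is built on a fixed decay factor $5$ per stage. It therefore requires the common degree to satisfy $C\ge\frac54\log 5\approx 2.01$. Here $C_1$ is only assumed to be at least $10^{2\delta}$, which is close to $1$ when $\delta$ is small; in this very paper $C_1=2C'$ lies just above $10^{2\delta}$ with $\delta\approx 1/325565$. That corollary also does not give an uncovered set of size $C_0\eta\#V$ uniformly for $\eta$ as small as $(\log y)^{-\delta}/\log\log y$. So ``if they match, the lemma follows immediately'' is not available. You must either cite \cite[Lemma 3.1]{FKMPT} directly, as the paper does, or derive the lemma from the general multi-stage covering theorem of \cite{FGKMT.LARGE}, with an explicit choice of the number of stages and the stage weights.

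Your reserve argument has a genuine gap exactly where you appeal to ``refined weighting''. If every round samples the $\mathbf{e}_i$ independently of what has survived, a vertex escapes all rounds with probability $\prod_i(1-\mathbb{P}(v\in\mathbf{e}_i))\approx e^{-C_1}$. This is a positive constant however the indices are split into rounds, so the shortfall is structural, not an accumulation of errors.

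The missing idea is the renormalisation in the nibble. At stage $j$ each edge must be drawn from the law of $\mathbf{e}_i$ restricted to sets lying inside the current uncovered set (suitably reweighted). Then a surviving vertex sees effective degree $d_j(v)/P_j(v)$, where $d_j(v)$ is its degree in the $j$-th class and $P_j(v)$ its survival density, and the density evolves as $P_{j+1}(v)=P_j(v)\exp(-d_j(v)/P_j(v))$.

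Justifying this requires approximate independence of the survival events for all vertex sets of size up to about $r$ times the number of stages. Each stage costs a power of $10$ in the required codegree smallness: for $m$ stages, a hypothesis of the shape $y^{-1/2}\le(\kappa^{A}e^{-AD}/C)^{10^{m+2}}$ with $A\gg rm$ and $\kappa\approx\eta$. With $r\asymp(\log y)^{1/2}/\log\log y$ this caps the number of stages at about $\frac12\log_{10}\log y$. So each stage must multiply the density by roughly $10^{-2\delta}$ to reach $(\log y)^{-\delta}$. The total degree consumed is then about $c/(1-e^{-c})$ with $c=2\delta\log 10$, which is less than $e^{c}=10^{2\delta}$.

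This is where the hypotheses $C_1\ge 10^{2\delta}$ and $\#\mathbf{e}_i\le K_0(\log y)^{1/2}/\log\log y$ come from. The binding constraint is the number of stages, not the per-round error $O(r^2y^{-1/100})$ that you single out as the main obstacle.
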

\begin{proof}
This is \cite[Lemma 3.1]{FKMPT}.
\end{proof}

We put
\[
V' = \bigcap_{i=1}^{2} V'_{ i}.
\]From part \textup{(iii)} of Proposition \ref{P1}, we obtain $V' \subset (S' \cap [1, y])$,
\[
\#((S' \cap [1, y])\setminus V') \leq \sum_{i=1}^{2}\#((S' \cap [1, y])\setminus V'_{ i})\leq \frac{ x}{50 \log x},
\]and if $n\in V'$, then for each $i\in\{1,2\}$ the statement \eqref{L1:lambda.1.struya} holds.

For each $q\in \mathcal{T}'$, we choose a random integer $\mathbf{n}'_{q}$ with probability density function
\begin{equation}\label{Def_n}
\mathbb{P}(\mathbf{n}'_{q}=n)=\frac{\lambda '(H_{q}; q, n)}{\sum_{-(K+1)y<t\leq y}\lambda '(H_{q}; q, t)}\qquad
(-(K+1)y< n \leq y).
\end{equation} Note that by \eqref{L1:lambda.1} the denominator is non-zero, so that this is a well-defined probability distribution. For $q\in \mathcal{T}'$, we define the random sets
\[
\mathbf{e}'_{q, i} = V' \cap \{\mathbf{n}'_{q}+ \alpha_{q,i} + qh: 1\leq h\leq KH_{q}\},\quad i=1, 2,
\]and put
\[
\mathbf{e}'_{q} = \bigsqcup_{i= 1}^{2} \mathbf{e}'_{q, i}.
\] In particular, we obtain that $\mathbf{e}'_{q} \subset V'$ for any $q\in \mathcal{T}'$.

We are going to apply Lemma \ref{L2} with $V=V'$, $\{\mathbf{e}_{1},\ldots, \mathbf{e}_{s}\}=\{\mathbf{e}'_{q}:\ q\in \mathcal{T}'\}$, $s=\# \mathcal{T}'$, $K_{0}=2K$, and
\begin{equation}\label{ETA.DEF}
\eta=\frac{1}{(\log y)^{\delta}\log \log y}.
\end{equation} We note that $s\geq 1$, since $\mathcal{T}'\neq \emptyset$, and by \eqref{SETUP:Q.EST} we have $s\leq \#\mathcal{Q}\leq y.$

If $q\in \mathcal{T}'$, then from \eqref{H_range} we have
\[
\#\mathbf{e}'_{q}\leq 2KH_{q}\leq \frac{2K(\log x)^{1/2}}{\log \log x}\leq   \frac{K_{0}(\log y)^{1/2}}{\log \log y}.
\] Hence, \eqref{L3:I} holds.

For $n\in V'$ and $q\in \mathcal{T}'$, we have from \eqref{n.qh.range.1}, \eqref{Def_n}, \eqref{L1:lambda.1}, \eqref{Def_lambda}, and \eqref{SIGMA.2} that
\begin{align}
\mathbb{P}(n\in \mathbf{e}'_{q})&=\sum_{i= 1}^{2}\sum_{1\leq h \leq KH_{q}} \mathbb{P}(\mathbf{n}'_{q}=n -\alpha_{q,i}-qh)\notag\\
&=\sum_{i=1}^{2}\sum_{1\leq h \leq K H_{q}} \frac{\lambda '(H_{q}; q, n-\alpha_{q,i}-qh)}{\sum_{-(K+1)y <t\leq y}\lambda '(H_{q}; q, t)}\notag\\
&\ll \frac{1}{y} H_{q} \sigma_{2}^{-2K H_q}\ll y^{-9/10},\label{PROB.n.Eq.1}
\end{align} which gives \eqref{L3:II}.

For $n\in V'$, we have from \eqref{Def_n}, \eqref{L1:lambda.1.struya}, \eqref{L1:lambda.1}, and \eqref{C.1.DIAPAZON} that
\begin{align*}
\sum_{q\in \mathcal{T}'} \mathbb{P}(n\in \mathbf{e}'_{q})&=
\sum_{q\in \mathcal{T}'}\sum_{1 \leq i \leq 2} \sum_{h \leq KH_{q}} \mathbb{P}(\mathbf{n}'_{q}=n-\alpha_{q,i}-qh)\\
&=\sum_{1 \leq i \leq 2}\sum_{q\in \mathcal{T}'} \sum_{h \leq KH_{q}}
\frac{\lambda '(H_{q}; q, n-\alpha_{q,i}-qh)}{\sum_{-(K+1)y<t\leq y}\lambda '(H_{q}; q, t)}\\
&= 2C' + O\big((\log x)^{-\delta(1+ \varepsilon)}\big)
\end{align*}and $10^{2\delta}\leq 2C'\leq 100$. Thus, \eqref{L3:IV} follows.

We now turn to \eqref{L3:III}. For any distinct $n, m \in V'$,  we have
\begin{align*}
\sum_{q\in \mathcal{T}'} \mathbb{P}(n, m \in \mathbf{e}'_{q})\leq \sum_{q\in \mathcal{T}'}\sum_{1\leq i \leq 2} \mathbb{P}(n, m \in \mathbf{e}'_{q,i})+
\sum_{q\in \mathcal{T}'}\sum_{\substack{1 \leq i, j\leq 2\\ i\neq j}}\mathbb{P}(n\in \mathbf{e}'_{q,i}, &m\in \mathbf{e}'_{q,j})\\
&=A_1 + A_2.
\end{align*}We first estimate the sum $A_{1}$. Put $N_{1}=n-m$. If both $n$, $m$ belong to some $\mathbf{e}_{q,i}$, then $q$ divides $N_{1}$. But $0<|N_{1}|<y$ and $q>z> y^{3/4}$, hence there is at most one such $q$. By \eqref{PROB.n.Eq.1}, we obtain
\begin{equation}\label{L:EST.A.1}
A_{1}\ll y^{-0.9}.
\end{equation}

Now we estimate the sum $A_{2}$. If $n\in \mathbf{e}_{q,i}$ and $m\in \mathbf{e}_{q, j}$ for some $q$ and $i\neq j$, then $N_{1} \equiv c_{q}$ (mod $q$) or $N_{1} \equiv - c_{q}$ (mod $q$), and hence $q$ divides $aN_{1}+b$ or $a(-N_{1})+ b$. But these both numbers are non-zero since $a \nmid b$, and therefore the number of such $q$ is $\ll \log y$. By \eqref{PROB.n.Eq.1}, we obtain
\begin{equation}\label{L:EST.A.2}
A_{2} \ll y^{-0.9} \log y.
\end{equation} From \eqref{L:EST.A.1} and \eqref{L:EST.A.2} we get
\[
\sum_{q\in \mathcal{T}'} \mathbb{P}(n, m \in \mathbf{e}'_{q}) \ll y^{-0.9} \log y < y^{-0.5},
\] which gives \eqref{L3:III}.

Thus, all hypotheses of Lemma \ref{L2} hold, and for each $q\in \mathcal{T}'$ there is a number $n'_{q}$ such that if we put
\[
e'_{q, i} = V' \cap \{n'_{q}+ \alpha_{q,i} + qh: 1\leq h\leq KH_{q}\},\quad i=1,2,
\]and
\[
e'_{q} = \bigsqcup_{i=1}^{2} e'_{q, i},
\]then we have
\[
\# \Big(V'\setminus \bigcup_{q\in \mathcal{T}'} e'_{q}\Big)\leq C_{0}\eta\#V'.
\]By \eqref{S.EST}, we have
\[
\#V'\leq 2 \sigma y \ll \frac{x (\log x)^{\delta}}{\log x}.
\] Thus, by \eqref{ETA.DEF},
\[
\# \Big(V'\setminus \bigcup_{q\in \mathcal{T}'} e'_{q}\Big) \ll \frac{x}{\log x \log\log x}
< \frac{ x}{50 \log x}.
\]

Similarly, we put
\[
V'' = \bigcap_{i=1}^{2} V''_{ i}.
\]From part \textup{(iv)} of Proposition \ref{P1}, we obtain $V'' \subset (S'' \cap [-y, -1])$,
\[
\#((S'' \cap [-y, -1])\setminus V'') \leq  \frac{ x}{50\log x},
\]and if $n\in V''$, then for each $i\in\{1,2\}$ the statement \eqref{L1:lambda.2.struya} holds.

For each $q\in \mathcal{T}''$, we choose a random integer $\mathbf{n}''_{q}$ with probability density function
\[
\mathbb{P}(\mathbf{n}''_{q}=n)=\frac{\lambda '(H_{q}; q, n)}{\sum_{-y\leq t< (K+1)y}\lambda ''(H_{q}; q, t)}\qquad
(-y\leq n < (K+1)y).
\] By \eqref{L1:lambda.2.N} the denominator is non-zero, so that this is a well-defined probability distribution. For $q\in \mathcal{T}''$, we define the random sets
\[
\mathbf{e}''_{q, i} = V'' \cap \{\mathbf{n}''_{q}+ \alpha_{q,i} - qh: 1\leq h\leq KH_{q}\},\quad i=1, 2,
\]and
\[
\mathbf{e}''_{q} = \bigsqcup_{i=1}^{2} \mathbf{e}''_{q, i},
\]In particular, we have $\mathbf{e}''_{q} \subset V''$ for any $q\in \mathcal{T}''$.

We are going to apply Lemma \ref{L2} with $V=V''$, $\{\mathbf{e}_{1},\ldots, \mathbf{e}_{s}\}=\{\mathbf{e}''_{q}:\ q\in \mathcal{T}''\}$, $s=\# \mathcal{T}''$, $K_{0}=2K$, and $\eta$ given by \eqref{ETA.DEF}.
Arguing as above, we see that all hypotheses of Lemma \ref{L2} hold, and hence for each $q\in \mathcal{T}''$ there is a number $n''_{q}$ such that if we set
\[
e''_{q, i} = V'' \cap \{n''_{q}+ \alpha_{q,i} - qh: 1\leq h\leq KH_{q}\},\quad i=1,2,
\]and
\[
e''_{q} = \bigsqcup_{i=1}^{2} e''_{q, i},
\]then we have
\[
\# \Big(V''\setminus \bigcup_{q\in \mathcal{T}''} e''_{q}\Big)\leq C_{0}\eta\#V''< \frac{ x}{50 \log x}.
\]

We recall that $\mathcal{T}' \cap \mathcal{T}'' = \emptyset$ and if $q\in \mathcal{Q}$, then $z< q \leq x/2$. By the Chinese Remainder Theorem, we take $d\equiv n'_{q}$ (mod $q$) for all $q\in \mathcal{T}'$ and $d \equiv -N - n''_{q}$ (mod $q$) for all $q\in \mathcal{T}''$. Therefore
\begin{align*}
\# (S_{x/2}(d)\cap [1,y])&\leq \# ((S'\cap [1, y])\setminus V')+
\# \Big(V'\setminus \bigcup_{q\in \mathcal{T}'}e'_{q}\Big)\\
&\leq \frac{ x}{50 \log x} + \frac{x}{50 \log x} = \frac{ x}{25 \log x},
\end{align*}and
\begin{align*}
\# (S_{x/2}(-N-d)\cap [-y,-1])&\leq \# ((S''\cap [-y, -1])\setminus V'')+
\# \Big(V''\setminus \bigcup_{q\in \mathcal{T}''}e''_{q}\Big)\\
&\leq \frac{x}{50 \log x} + \frac{x}{50 \log x} = \frac{x}{25 \log x}.
\end{align*}

Let us denote
\begin{align*}
\mathcal{A}'&:= S_{x/2}(d)\cap [1,y],\qquad &&\mathcal{A}'':= S_{x/2}(-N-d)\cap [-y,-1],\\
\mathcal{D}' &:=\{q: x/2< q\leq (3x)/4\},\qquad &&\mathcal{D}'' :=\{q: (3x)/4< q\leq x\}.
\end{align*}
 Then we have
\[
\# \mathcal{D}' > \frac{x}{10\log x}> \#\mathcal{A}',\qquad \# \mathcal{D}'' > \frac{x}{10\log x}> \#\mathcal{A}''.
\] Hence, we may pair up each element $a\in \mathcal{A}'$ with a unique prime $q(a)\in \mathcal{D}'$ and pair up each element $a\in \mathcal{A}''$ with a unique prime $q(a)\in \mathcal{D}''$. We take $d\equiv a$ (mod $q(a)$) for every $a\in \mathcal{A}'$ and $d\equiv -N - a$ (mod $q(a)$) for every $a\in \mathcal{A}''$ (applying again the Chinese Remainder Theorem). We obtain
\[
S_{x}(d)\cap [1,y]=\emptyset \qquad\text{and}\qquad S_{x}(-N-d)\cap [-y,-1]=\emptyset.
\]

Let $x=(\log N)/4$ and let $N$ be sufficiently large. By \eqref{SETUP:P.x.EST} and \eqref{SETUP:Def.y},  we have
\[
 P(x)\leq N^{1/3}\qquad \text{and}\qquad y \sim \frac{1}{4} \log N (\log\log N)^{\delta}.
\] Let $d_{1}$ be such that $d_{1}\equiv d$ (mod $P (x)$) and $d_{1}\in [-0.3 N, -0.2 N]$. We have $S_{x}(d_{1})\cap [1,y]=\emptyset$, $S_{x}(-N -d_{1})\cap [-y,-1]=\emptyset$,  and hence
\[
S_{x}\cap (d_2 + [1,y]) = \emptyset,\qquad\text{and}\qquad S_{x}\cap (N-d_2 + [-y,-1]) = \emptyset ,
\]where $d_{2} = -d_{1}$. We set $I_{1}=\{d_{2}+1,\ldots, d_{2}+y\}$ and $I_{2}=\{N-d_{2}- y,\ldots, N-d_{2}-1\}$. Then $I_{1}$ and $I_{2}$ are strings of consecutive positive integers $n \in [0.2 N, 0.8 N]$ of length $y \gg \log N (\log\log N)^{\delta}$ and for any $n\in I_{1}\cup I_{2}$ there is a prime $q\leq (\log N)/4$ such that $q$ divides $n(an+b)$. Since $q\equiv 3$ (mod $4$) at least one of $n$ or $an+b$ does not lie in $\mathcal{R}$. Finally, we take $n_{1} = d_{2} + [y/2]$ and $n_{2} = N - d_{2} - [y/2]$. This completes the proof of Theorem \ref{T1} assuming Proposition \ref{P1}.

\section{Deduction of Proposition \ref{P1} from Proposition \ref{P2}}

 From part \textup{(i)} of Proposition \ref{P2} we have
\[
\mathbb{E}(\#(\mathbf{S}'\cap [1, y]) - \sigma y)^{2}\ll \frac{(\sigma y)^{2}}{\log y}\qquad\text{and}\qquad
\mathbb{E}(\#(\mathbf{S}''\cap [-y, -1]) - \sigma y)^{2}\ll \frac{(\sigma y)^{2}}{\log y}.
\]Hence by Chebyshev's inequality, we see that
\begin{align}
\mathbb{P}( \#(\mathbf{S}'\cap [1, y])\leq 2\sigma y)&= 1 - O\left(\frac{1}{\log x}\right),\label{PROB.S.EST}\\
\mathbb{P}( \#(\mathbf{S}''\cap [-y, -1])\leq 2\sigma y)&= 1 - O\left(\frac{1}{\log x}\right).\label{PROB.S.EST.2}
\end{align}

Fix $H\in \mathfrak{H}'$. From \eqref{L1:lambda} we have
\begin{equation}\label{SUMMA.LAMBDA.1.M}
\mathbb{E}\sum_{q\in \mathcal{Q}_{H}}\Bigg(\sum_{-(K+1)y< n\leq y}\bm{\lambda}'(H;q,n)- (K+2)y\Bigg)^{2}\ll \frac{y^{2}\#\mathcal{Q}_{H}\log H}{H^{M-2}}\ll \frac{y^{2}\#\mathcal{Q}_{H}}{H^{M-2-\varepsilon}}.
\end{equation}We put
\[
\bm{\mathcal{T}}'_{H}=\Big\{q\in \mathcal{Q}_{H}: \Big|\sum_{-(K+1)y<n\leq y}\bm{\lambda}'(H;q,n)- (K+2)y\Big|\leq \frac{y}{H^{1+\varepsilon}}\Big\}.
\] We have
\begin{align}
\mathbb{E}&\# (\mathcal{Q}_{H}\setminus \bm{\mathcal{T}}'_{H})=\mathbb{E}\sum_{q\in \mathcal{Q}_{H}\setminus \bm{\mathcal{T}}'_{H}} 1\notag\\
&=\mathbb{E}\sum_{q\in \mathcal{Q}_{H}\setminus \bm{\mathcal{T}}'_{H}} \frac{\bigg(\sum_{-(K+1)y< n\leq y}\bm{\lambda}'(H;q,n)- (K+2)y\bigg)^{2}}{\bigg(\sum_{-(K+1)y< n\leq y}\bm{\lambda}'(H;q,n)- (K+2)y\bigg)^{2}}\notag\\
&\leq \frac{H^{2+2\varepsilon}}{y^{2}}\mathbb{E}\sum_{q\in \mathcal{Q}_{H}}\Bigg(\sum_{-(K+1)y< n\leq y}\bm{\lambda}'(H;q,n)- (K+2)y\Bigg)^{2}
\ll \frac{\# \mathcal{Q}_{H}}{H^{M-4-3\varepsilon}}.\label{EXP.R.H.NU.1}
\end{align}By Markov's inequality, we have
\[
\mathbb{P}\bigg(\# (\mathcal{Q}_{H}\setminus \bm{\mathcal{T}}'_{H})\leq \frac{\# \mathcal{Q}_{H}}{H^{M-4-4\varepsilon}}\bigg)=1- O(H^{-\varepsilon}).
\]We observe that for any $\beta>0$ we have (see \ref{H_range})
\begin{equation}\label{L2:H_alpha}
\sum_{H\in \mathfrak{H}} H^{-\beta}\leq \frac{1}{\big((\log x)^{\delta}\big)^{\beta}}+
\frac{1}{\big(\xi(\log x)^{\delta}\big)^{\beta}}+\frac{1}{\big(\xi^{2}(\log x)^{\delta}\big)^{\beta}}+\ldots\ll_{\beta} (\log x)^{-\delta \beta}.
\end{equation}Hence, with probability $1-O((\log x)^{-\delta\varepsilon})$ the relation
\begin{equation}\label{BASIC.I}
\# (\mathcal{Q}_{H}\setminus \bm{\mathcal{T}}'_{H})\leq \frac{\# \mathcal{Q}_{H}}{H^{M-4-4\varepsilon}}
\end{equation}holds for every $H\in\mathfrak{H}'$ simultaneously. We put
\[
\bm{\mathcal{T}}'=\bigcup_{H\in \mathfrak{H}'} \bm{\mathcal{T}}'_{H}.
\]Since $H\geq (\log x)^{\delta}$ for any $H\in \mathfrak{H}$, we have for any $q\in \bm{\mathcal{T}}'$
\begin{equation}\label{SUM.LAMBDA.1}
\sum_{-(K+1)y< n \leq y} \bm{\lambda}' (H_{q}; q, n) = \bigg(1+O\bigg(\frac{1}{(\log x)^{\delta (1+\varepsilon)}}\bigg)\bigg)(K+2)y.
\end{equation}

Fix $H\in \mathfrak{H}''$. From \eqref{L1:lambda.2} we have
\begin{equation}\label{SUMMA.LAMBDA.2.M}
\mathbb{E}\sum_{q\in \mathcal{Q}_{H}}\Bigg(\sum_{-y\leq n< (K+1)y}\bm{\lambda}''(H;q,n)- (K+2)y\Bigg)^{2}\ll \frac{y^{2}\#\mathcal{Q}_{H}\log H}{H^{M-2}}\ll \frac{y^{2}\#\mathcal{Q}_{H}}{H^{M-2-\varepsilon}}.
\end{equation}We put
\[
\bm{\mathcal{T}}''_{H}=\Big\{q\in \mathcal{Q}_{H}: \Big|\sum_{-y\leq n < (K+1)y}\bm{\lambda}''(H;q,n)- (K+2)y\Big|\leq \frac{y}{H^{1+\varepsilon}}\Big\}.
\] Similarly, we have
\begin{equation}\label{STRIKE.2}
\mathbb{E}\# (\mathcal{Q}_{H}\setminus \bm{\mathcal{T}}''_{H})
\ll \frac{\# \mathcal{Q}_{H}}{H^{M-4-3\varepsilon}},
\end{equation}and with probability $1-O((\log x)^{-\delta\varepsilon})$ the relation
\begin{equation}\label{BASIC.II}
\# (\mathcal{Q}_{H}\setminus \bm{\mathcal{T}}''_{H})\leq \frac{\# \mathcal{Q}_{H}}{H^{M-4-4\varepsilon}}
\end{equation}holds for every $H\in\mathfrak{H}''$ simultaneously. We put
\[
\bm{\mathcal{T}}''=\bigcup_{H\in \mathfrak{H}''} \bm{\mathcal{T}}''_{H}.
\]For any $q\in \bm{\mathcal{T}}''$ we have
\begin{equation}\label{SUM.LAMBDA.2}
\sum_{-y\leq n < (K+1)y} \bm{\lambda}'' (H_{q}; q, n) = \bigg(1+O\bigg(\frac{1}{(\log x)^{\delta (1+\varepsilon)}}\bigg)\bigg)(K+2)y.
\end{equation}

We work on part \textup{(iii)} of Proposition \ref{P1} using part \textup{(iii)} of Proposition \ref{P2} in a similar fashion to previous arguments. Fix $H\in \mathfrak{H}'$ and $i\in\{1,2\}$. From \eqref{L3:lamda.AP.1} we have
\begin{align*}
\mathbb{E}&\sum_{n\in \mathbf{S}'\cap [1,y]} \Bigg( \sum_{q\in \mathcal{Q}_{H}}\sum_{h\leq KH}\bm{\lambda}'(H;q, n-\alpha_{q, i}-qh) - \frac{\#\mathcal{Q}_{H} [KH]}{\sigma_{2}}\Bigg)^{2}\\
&\ll \frac{\log H}{H^{M-2}} \bigg(\frac{\#\mathcal{Q}_{H}[KH]}{\sigma_2}\bigg)^{2}\sigma y
\ll \frac{1}{H^{M-2 - \varepsilon}} \bigg(\frac{\#\mathcal{Q}_{H}[KH]}{\sigma_2}\bigg)^{2}\sigma y.
\end{align*}We put
\begin{align}
\bm{\mathcal{E}}'_{H, i}=\Bigg\{&n\in \mathbf{S}'\cap [1,y]:\notag\\
 &\Bigg|\sum_{q\in \mathcal{Q}_{H}}\sum_{h\leq K H}\bm{\lambda}'(H;q, n-\alpha_{q, i}-qh) - \frac{\#\mathcal{Q}_{H} [KH]}{\sigma_{2}}\Bigg|\geq \frac{\#\mathcal{Q}_{H} [KH]}{\sigma_{2} H^{1+ \varepsilon}}\Bigg\}.\label{E.1.DEF}
\end{align}Since $M>6$ and $\varepsilon < (M-6)/7$, we have
\begin{align*}
&\mathbb{E} \#\bm{\mathcal{E}}'_{H, i}=\mathbb{E}\sum_{n\in \bm{\mathcal{E}}'_{H, i}} 1\\
&=
\mathbb{E}\sum_{n\in \bm{\mathcal{E}}'_{H,i}}\frac{\big(\sum_{q\in \mathcal{Q}_{H}}\sum_{h\leq KH}\bm{\lambda}'(H;q, n-\alpha_{q,i}-qh) - (\#\mathcal{Q}_{H} [KH])/\sigma_{2}\big)^{2}}{\big(\sum_{q\in \mathcal{Q}_{H}}\sum_{h\leq KH}\bm{\lambda}'(H;q, n-\alpha_{q,i}-qh) - (\#\mathcal{Q}_{H} [KH])/\sigma_{2}\big)^{2}}\\
&\leq \frac{\sigma_{2}^{2}H^{2 +2\varepsilon}}{(\#\mathcal{Q}_{H}[KH])^{2}}
\mathbb{E}\sum_{n\in \mathbf{S}'\cap [1,y]}\Bigg(\sum_{q\in \mathcal{Q}_{H}}\sum_{h\leq KH}\bm{\lambda}'(H;q, n-\alpha_{q,i}-qh) - \frac{\#\mathcal{Q}_{H} [KH]}{\sigma_{2}}\Bigg)^{2}\\
&\qquad\qquad\qquad\qquad\qquad\qquad\qquad\qquad\qquad\qquad\qquad\qquad\quad\quad\ \ \
\ll \frac{\sigma y}{H^{M-4-3\varepsilon}}\ll \frac{\sigma y}{H^{2}}.
\end{align*}By Markov's inequality, we have
\begin{equation}\label{MARKOV.1}
\mathbb{P}\Big(\# \bm{\mathcal{E}}'_{H,i}\leq \frac{\sigma y}{H^{1+\varepsilon}}\Big)= 1-O\Big(\frac{1}{H^{1- \varepsilon}}\Big).
\end{equation}

We next estimate the contribution from ``bad'' primes $q\in \mathcal{Q}_{H}\setminus \bm{\mathcal{T}}'_{H}$. For any $h\leq KH$, by the Cauchy-Schwarz inequality we have
\begin{align}
\mathbb{E}\sum_{n\in \mathbf{S}'\cap [1,y]} &\sum_{q\in \mathcal{Q}_{H}\setminus \bm{\mathcal{T}}'_{H}} \bm{\lambda}'(H; q, n-\alpha_{q,i}-qh)\leq \big(\mathbb{E}\# (\mathcal{Q}_{H}\setminus \bm{\mathcal{T}}'_{H})\big)^{1/2}\cdot\notag\\
&\cdot\Bigg(\mathbb{E} \sum_{q\in \mathcal{Q}_{H}\setminus \bm{\mathcal{T}}'_{H}} \bigg(\sum_{n=1}^{y} \bm{\lambda}'(H; q, n-\alpha_{q, i}-qh)\bigg)^{2}\Bigg)^{1/2}.\label{CAUSHY.LAMBDA.1}
\end{align}Given $q\in \mathcal{Q}_{H}\setminus \bm{\mathcal{T}}'_{H}$, we have
\[
\bigg|\sum_{n=1}^{y} \bm{\lambda}'(H; q, n-\alpha_{q,i}-qh)\bigg|\leq \bigg|\sum_{n=1}^{y} \bm{\lambda}'(H; q, n-\alpha_{q, i}-qh)- (K+2)y\bigg| + (K+2)y.
\] Applying \eqref{n.qh.range.1}, we obtain
\begin{align*}
\bigg|\sum_{n=1}^{y} \bm{\lambda}'(H; q, n&-\alpha_{q, i}-qh)- (K+2)y\bigg|\\
 &\leq\max\Bigg((K+2)y, \bigg|\sum_{-(K+1)y<n\leq y} \bm{\lambda}'(H; q, n)- (K+2)y\bigg|\Bigg)\\
 &\leq\bigg|\sum_{-(K+1)y<n\leq y} \bm{\lambda}'(H; q, n)- (K+2)y\bigg|+ (K+2)y.
\end{align*}

Since $(a+b)^{2}\leq 2 (a^{2}+b^{2})$, we have
\[
\bigg(\sum_{n=1}^{y} \bm{\lambda}'(H; q, n-\alpha_{q,i}-qh)\bigg)^{2}\leq 2\bigg(\sum_{-(K+1)y<n\leq y} \bm{\lambda}'(H; q, n)- (K+2)y\bigg)^{2}+ 8(K+2)^{2}y^{2}.
\]Applying \eqref{SUMMA.LAMBDA.1.M} and \eqref{EXP.R.H.NU.1}, we obtain
\begin{align*}
\mathbb{E} \sum_{q\in \mathcal{Q}_{H}\setminus \bm{\mathcal{T}}'_{H}} &\bigg(\sum_{n=1}^{y} \bm{\lambda}'(H; q, n-\alpha_{q,i}-qh)\bigg)^{2}\\
 &\leq 2\mathbb{E} \sum_{q\in \mathcal{Q}_{H}\setminus \bm{\mathcal{T}}'_{H}} \bigg(\sum_{-(K+1)y<n\leq y} \bm{\lambda}'(H; q, n)- (K+2)y\bigg)^{2}\\
 &+ 8\big((K+2)y\big)^{2}\mathbb{E}\# (\mathcal{Q}_{H}\setminus \bm{\mathcal{T}}'_{H})
 \ll \frac{y^{2}\#\mathcal{Q}_{H}}{H^{M-2 -\varepsilon}} + \frac{y^{2}\#\mathcal{Q}_{H}}{H^{M-4 - 3\varepsilon}}\ll
 \frac{y^{2}\#\mathcal{Q}_{H}}{H^{M-4-3\varepsilon}}.
\end{align*} We see from \eqref{CAUSHY.LAMBDA.1} and \eqref{EXP.R.H.NU.1} that
\[
\mathbb{E}\sum_{n\in \mathbf{S}'\cap [1,y]} \sum_{q\in \mathcal{Q}_{H}\setminus \bm{\mathcal{T}}'_{H}} \bm{\lambda}'(H; q, n-\alpha_{q,i}-qh)\ll \frac{y \#\mathcal{Q}_{H}}{H^{M-4 - 3\varepsilon}}.
\]By summing over $h\leq KH$, we get
\[
\mathbb{E}\sum_{n\in \mathbf{S}'\cap [1,y]} \sum_{q\in \mathcal{Q}_{H}\setminus \bm{\mathcal{T}}'_{H}}
 \sum_{h\leq KH}\bm{\lambda}'(H; q, n-\alpha_{q,i}-qh)\ll \frac{y \#\mathcal{Q}_{H}}{H^{M-5 - 3\varepsilon}}.
\]We put
\begin{equation}\label{T.1.DEF}
\bm{\mathcal{W}}'_{H,i}=\bigg\{n\in \mathbf{S}'\cap [1, y]:
\sum_{q\in \mathcal{Q}_{H}\setminus \bm{\mathcal{T}}'_{H}}
 \sum_{h\leq KH}\bm{\lambda}'(H; q, n-\alpha_{q,i}-qh)\geq \frac{ \#\mathcal{Q}_{H}[KH]}{H^{1+\varepsilon}\sigma_2}\bigg\}.
\end{equation}We have
\begin{align*}
\mathbb{E} \#\bm{\mathcal{W}}'_{H,i}&=
\mathbb{E}\sum_{n\in \bm{\mathcal{W}}'_{H,i}} \frac{\sum_{q\in \mathcal{Q}_{H}\setminus \bm{\mathcal{T}}'_{H}}
 \sum_{h\leq KH}\bm{\lambda}'(H; q, n-\alpha_{q,i}-qh)}{\sum_{q\in \mathcal{Q}_{H}\setminus \bm{\mathcal{T}}'_{H}}
 \sum_{h\leq KH}\bm{\lambda}'(H; q, n-\alpha_{q,i}-qh)}\\
 &\leq \frac{H^{1+\varepsilon}\sigma_2}{\#\mathcal{Q}_{H}[KH]}
 \mathbb{E}\sum_{n\in \mathbf{S}'\cap [1,y]}\sum_{q\in \mathcal{Q}_{H}\setminus \bm{\mathcal{T}}'_{H}}
 \sum_{h\leq KH}\bm{\lambda}'(H; q, n-\alpha_{q,i}-qh)\\
 &\ll \frac{y\sigma_2}{H^{M-5-4\varepsilon}} \ll \sigma y \frac{\log H}{H^{M-5 - 4\varepsilon}}\ll \frac{\sigma y}{H^{M-5-5\varepsilon}}.
\end{align*}By Markov's inequality, we have
\begin{equation}\label{MARKOV.2}
\mathbb{P}\bigg(\#\bm{\mathcal{W}}'_{H,i} \leq \frac{\sigma y}{H^{1+\varepsilon}}\bigg)=1-O\Big(\frac{1}{H^{M-6-6\varepsilon}}\Big).
\end{equation} We see from \eqref{L2:H_alpha}, \eqref{MARKOV.1} and \eqref{MARKOV.2} that with probability $1 - O\big((\log x)^{-\delta \gamma}\big)$, where
\[
\gamma = \min (1-\varepsilon, M-6-6\varepsilon)  >0,
\] the relations
\[
\#\bm{\mathcal{E}}'_{H,i} \leq \frac{\sigma y}{H^{1+\varepsilon}},\qquad
\#\bm{\mathcal{W}}'_{H,i} \leq \frac{\sigma y}{H^{1+\varepsilon}}
\]hold for every $H\in \mathfrak{H}'$ and $i\in\{1,2\}$ simultaneously.

Fix $H\in \mathfrak{H}''$ and $i\in\{1,2\}$. From \eqref{L3:lamda.AP.2} we have
\begin{align*}
\mathbb{E}&\sum_{n\in \mathbf{S}''\cap [-y,-1]} \Bigg( \sum_{q\in \mathcal{Q}_{H}}\sum_{h\leq KH}\bm{\lambda}''(H;q, n-\alpha_{q, i}+qh) - \frac{\#\mathcal{Q}_{H} [KH]}{\sigma_{2}}\Bigg)^{2}\\
&\ll \frac{\log H}{H^{M-2}} \bigg(\frac{\#\mathcal{Q}_{H}[KH]}{\sigma_2}\bigg)^{2}\sigma y
\ll \frac{1}{H^{M-2 - \varepsilon}} \bigg(\frac{\#\mathcal{Q}_{H}[KH]}{\sigma_2}\bigg)^{2}\sigma y.
\end{align*}We put
\begin{align}
\bm{\mathcal{E}}''_{H, i}=\Bigg\{&n\in \mathbf{S}''\cap [-y,-1]:\notag\\
 &\Bigg|\sum_{q\in \mathcal{Q}_{H}}\sum_{h\leq K H}\bm{\lambda}''(H;q, n-\alpha_{q, i}+qh) - \frac{\#\mathcal{Q}_{H} [KH]}{\sigma_{2}}\Bigg|\geq \frac{\#\mathcal{Q}_{H} [KH]}{\sigma_{2} H^{1+ \varepsilon}}\Bigg\}.\label{E.2.DEF}
\end{align}Arguing as above, we obtain
\[
\mathbb{E} \#\bm{\mathcal{E}}''_{H, i}
\ll \frac{\sigma y}{H^{M-4-3\varepsilon}}\ll \frac{\sigma y}{H^{2}}.
\]By Markov's inequality, we have
\begin{equation}\label{MARKOV.3}
\mathbb{P}\Big(\# \bm{\mathcal{E}}''_{H,i}\leq \frac{\sigma y}{H^{1+\varepsilon}}\Big)= 1-O\Big(\frac{1}{H^{1- \varepsilon}}\Big).
\end{equation}

We next estimate the contribution from ``bad'' primes $q\in \mathcal{Q}_{H}\setminus \bm{\mathcal{T}}''_{H}$. For any $h\leq KH$, by the Cauchy-Schwarz inequality we have
\begin{align}
\mathbb{E}\sum_{n\in \mathbf{S}''\cap [-y,-1]} &\sum_{q\in \mathcal{Q}_{H}\setminus \bm{\mathcal{T}}''_{H}} \bm{\lambda}''(H; q, n-\alpha_{q,i}+qh)\leq \big(\mathbb{E}\# (\mathcal{Q}_{H}\setminus \bm{\mathcal{T}}''_{H})\big)^{1/2}\cdot\notag\\
&\cdot\Bigg(\mathbb{E} \sum_{q\in \mathcal{Q}_{H}\setminus \bm{\mathcal{T}}''_{H}} \bigg(\sum_{-y\leq n\leq -1} \bm{\lambda}''(H; q, n-\alpha_{q, i}+qh)\bigg)^{2}\Bigg)^{1/2}.\label{CAUSHY.LAMBDA.2}
\end{align}Given $q\in \mathcal{Q}_{H}\setminus \bm{\mathcal{T}}''_{H}$, we have
\[
\bigg|\sum_{-y\leq n\leq -1} \bm{\lambda}''(H; q, n-\alpha_{q,i}+qh)\bigg|\leq \bigg|\sum_{-y\leq n\leq -1} \bm{\lambda}''(H; q, n-\alpha_{q, i}+qh)- (K+2)y\bigg| + (K+2)y.
\] Applying \eqref{n.qh.range.2}, we obtain
\begin{align*}
\bigg|\sum_{-y\leq n\leq -1} \bm{\lambda}''(H; q, n&-\alpha_{q, i}+qh)- (K+2)y\bigg|\\
 &\leq\max\Bigg((K+2)y, \bigg|\sum_{-y \leq n< (K+1)y} \bm{\lambda}''(H; q, n)- (K+2)y\bigg|\Bigg)\\
 &\leq\bigg|\sum_{-y \leq n < (K+1)y} \bm{\lambda}''(H; q, n)- (K+2)y\bigg|+ (K+2)y.
\end{align*}

Since $(a+b)^{2}\leq 2 (a^{2}+b^{2})$, we get
\[
\bigg(\sum_{-y\leq n\leq -1} \bm{\lambda}''(H; q, n-\alpha_{q,i}+qh)\bigg)^{2}\leq 2\bigg(\sum_{-y \leq n< (K+1)y} \bm{\lambda}''(H; q, n)- (K+2)y\bigg)^{2}+ 8(K+2)^{2}y^{2}.
\]Applying \eqref{SUMMA.LAMBDA.2.M} and \eqref{STRIKE.2}, we obtain
\begin{align*}
\mathbb{E} \sum_{q\in \mathcal{Q}_{H,\nu}\setminus \bm{\mathcal{T}}''_{H}} &\bigg(\sum_{-y\leq n\leq -1} \bm{\lambda}''(H; q, n-\alpha_{q,i}+qh)\bigg)^{2}\\
 &\leq2\mathbb{E} \sum_{q\in \mathcal{Q}_{H}\setminus \bm{\mathcal{T}}''_{H}} \bigg(\sum_{-y\leq n< (K+1) y} \bm{\lambda}''(H; q, n)- (K+2)y\bigg)^{2}\\
 &+ 8\big((K+2)y\big)^{2}\mathbb{E}\# (\mathcal{Q}_{H}\setminus \bm{\mathcal{T}}''_{H})
 \ll \frac{y^{2}\#\mathcal{Q}_{H}}{H^{M-2 -\varepsilon}} + \frac{y^{2}\#\mathcal{Q}_{H}}{H^{M-4 - 3\varepsilon}}\ll
 \frac{y^{2}\#\mathcal{Q}_{H}}{H^{M-4-3\varepsilon}}.
\end{align*} By \eqref{CAUSHY.LAMBDA.2} and \eqref{STRIKE.2}, we have
\[
\mathbb{E}\sum_{n\in \mathbf{S}''\cap [-y,-1]} \sum_{q\in \mathcal{Q}_{H}\setminus \bm{\mathcal{T}}''_{H}} \bm{\lambda}''(H; q, n-\alpha_{q,i}+qh)\ll \frac{y \#\mathcal{Q}_{H}}{H^{M-4 - 3\varepsilon}}.
\]By summing over $h\leq KH$, we get
\[
\mathbb{E}\sum_{n\in \mathbf{S}''\cap [-y,-1]} \sum_{q\in \mathcal{Q}_{H}\setminus \bm{\mathcal{T}}''_{H}}
 \sum_{h\leq KH}\bm{\lambda}''(H; q, n-\alpha_{q,i}+qh)\ll \frac{y \#\mathcal{Q}_{H}}{H^{M-5 - 3\varepsilon}}.
\]We put
\begin{equation}\label{T.2.DEF}
\bm{\mathcal{W}}''_{H,i}=\bigg\{n\in \mathbf{S}''\cap [-y, -1]:
\sum_{q\in \mathcal{Q}_{H}\setminus \bm{\mathcal{T}}''_{H}}
 \sum_{h\leq KH}\bm{\lambda}''(H; q, n-\alpha_{q,i}+qh)\geq \frac{ \#\mathcal{Q}_{H}[KH]}{H^{1+\varepsilon}\sigma_2}\bigg\}.
\end{equation}We see that
\[
\mathbb{E} \#\bm{\mathcal{W}}''_{H,i}\ll \frac{y\sigma_2}{H^{M-5-4\varepsilon}} \ll \sigma y \frac{\log H}{H^{M-5 - 4\varepsilon}}\ll \frac{\sigma y}{H^{M-5-5\varepsilon}}.
\]By Markov's inequality, we have
\begin{equation}\label{MARKOV.4}
\mathbb{P}\bigg(\#\bm{\mathcal{W}}''_{H,i} \leq \frac{\sigma y}{H^{1+\varepsilon}}\bigg)=1-O\Big(\frac{1}{H^{M-6-6\varepsilon}}\Big).
\end{equation} We see from \eqref{L2:H_alpha}, \eqref{MARKOV.3} and \eqref{MARKOV.4} that with probability $1 - O\big((\log x)^{-\delta \gamma}\big)$, where
\[
\gamma = \min (1-\varepsilon, M-6-6\varepsilon)  >0,
\] the relations
\[
\#\bm{\mathcal{E}}''_{H,i} \leq \frac{\sigma y}{H^{1+\varepsilon}},\qquad
\#\bm{\mathcal{W}}''_{H,i} \leq \frac{\sigma y}{H^{1+\varepsilon}}
\]hold for every $H\in \mathfrak{H}''$ and $i\in\{1,2\}$ simultaneously.

Now we make a choice of $d$ (mod $P(z)$). We consider the event that $\# (\mathbf{S}' \cap [1,y])\leq 2\sigma y$ and that $\# (\mathbf{S}'' \cap [-y,-1])\leq 2\sigma y$, that for each $H\in \mathfrak{H}'$,  $i\in\{1,2\}$, the sets $\bm{\mathcal{E}}'_{H,i}$, $\bm{\mathcal{W}}'_{H,i}$ have size at most $(\sigma y) H^{-1-\varepsilon}$, that for each $H\in \mathfrak{H}''$, $i\in\{1,2\}$, the sets $\bm{\mathcal{E}}''_{H,i}$, $\bm{\mathcal{W}}''_{H,i}$ have size at most $(\sigma y) H^{-1-\varepsilon}$, that for each $H\in\mathfrak{H}'$ the relation \eqref{BASIC.I} holds, and that for each $H\in\mathfrak{H}''$ the relation \eqref{BASIC.II} holds. By the above discussion, this event holds with probability $1- o(1)$ as $x\to \infty$, and so this probability is at least $1/2$ provided that $x$ is large enough in terms of $\delta$, $M$, $K$, $\xi$, and $\varepsilon$. From now, we fix a $d$ mod $P(z)$ such that it is so, and thus all of our random sets and weights become deterministic. We see that,  $\mathcal{T}'$ and $\mathcal{T}''$ are non-empty subsets of $\mathcal{Q}'$ and $\mathcal{Q}''$ respectively, and thus, by \eqref{SUM.LAMBDA.1} and \eqref{SUM.LAMBDA.2}, parts \textup{(i)} and \textup{(ii)} of Proposition \ref{P1} are verified. Now we verify parts \textup{(iii)} and \textup{(iv)} of Proposition \ref{P1}.

Fix  $i\in\{1,2\}$. We set
\[
V'_{i} = (S'\cap [1,y]) \setminus \bigcup_{H\in \mathfrak{H}'}
(\mathcal{E}'_{H,i}\cup \mathcal{W}'_{H,i}).
\]By \eqref{L2:H_alpha}, we have
\begin{align*}
\#\Bigg(\bigcup_{H\in \mathfrak{H}'}
(\mathcal{E}'_{H,i}\cup \mathcal{W}'_{H,i})\Bigg)\ll \sigma y
\sum_{H\in \mathfrak{H}'}\frac{1}{H^{1+\varepsilon}}&\ll \frac{\sigma y}{(\log x)^{\delta (1+\varepsilon)}}\\
&\ll \frac{x}{(\log x)^{1+\delta \varepsilon}}< \frac{x}{100 \log x},
\end{align*}verifying \eqref{V.1.DOPOLNENIYE}.

We fix arbitrary $n\in V'_{i}$. For such $n$, the inequalities  \eqref{E.1.DEF} and \eqref{T.1.DEF} both fail, and therefore for each $H\in \mathfrak{H}'$,
\begin{align*}
\sum_{q\in \mathcal{T}'_{H}}\sum_{h\leq K H}\lambda '(H; q, n-\alpha_{q,i}-qh)&=
\bigg(1+ O\bigg(\frac{1}{H^{1+\varepsilon}}\bigg)\bigg) \frac{\#\mathcal{Q}_{H}[KH]}{\sigma_2}\\
&=\bigg(1+ O\bigg(\frac{1}{(\log x)^{\delta(1+\varepsilon)}}\bigg)\bigg) \frac{\#\mathcal{Q}_{H}[KH]}{\sigma_2}.
\end{align*}Summing over all $H\in \mathfrak{H}'$, we have
\begin{align*}
\sum_{q\in \mathcal{T}'}\sum_{h\leq KH_{q}}\lambda '(H; q, n-\alpha_{q, i}-qh)&=
\sum_{H\in \mathfrak{H}'}\sum_{q\in \mathcal{T}'_{H}}\sum_{h\leq KH_{q}}\lambda'(H; q, n-\alpha_{q, i}-qh)\\
&=\bigg(1+O\bigg(\frac{1}{(\log x)^{\delta (1+\varepsilon)}}\bigg)\bigg)C' (K+2)y,
\end{align*}where
\[
C'= \frac{1}{(K+2)y}\sum_{H\in \mathfrak{H}'}\frac{\# \mathcal{Q}_{H} [KH]}{\sigma_{2}}.
\] Note that $C'$ depends on $x$, $K$, $M$, $\xi$, and $\delta$, but not on $n$ and $i$. Since
\[
[KH]=KH\Big(1+ O\Big(\frac{1}{H}\Big)\Big) = KH\Big(1+ O\Big(\frac{1}{(\log x)^{\delta}}\Big)\Big),
\] we get, using \eqref{QH.NU.ASYMPT} and \eqref{SIGMA.2},
\[
C'\sim \frac{K (1-1/\xi)}{2(K+2)M} \sum_{H\in\mathfrak{H}'}\frac{1}{\log H}.
\]Recalling the definition of $\mathfrak{H}'$, we see that
\[
C'\sim \frac{K (1-1/\xi)}{4(K+2)M \log \xi} \sum_{j} \frac{1}{j},
\]where $j$ runs over the interval
\[
\frac{\delta \log\log x}{2 \log \xi}(1+o(1)) \leq j \leq \frac{\log\log x}{4\log \xi} (1+o(1)).
\]We thus obtain
\[
C'\sim \frac{K}{4(K+2) M} \frac{1-1/\xi}{\log \xi} \log \Big(\frac{1}{2\delta}\Big).
\]

Similarly, for fixed $i\in\{1,2\}$, we set
\[
V''_{i} = (S''\cap [-y,-1]) \setminus \bigcup_{H\in \mathfrak{H}''}
(\mathcal{E}''_{H,i}\cup \mathcal{W}''_{H,i}).
\]By \eqref{L2:H_alpha}, we have
\begin{align*}
\#\Bigg(\bigcup_{H\in \mathfrak{H}''}
(\mathcal{E}''_{H,i}\cup \mathcal{W}''_{H,i})\Bigg)\ll \sigma y
\sum_{H\in \mathfrak{H}''}\frac{1}{H^{1+\varepsilon}}&\ll \frac{\sigma y}{(\log x)^{\delta (1+\varepsilon)}}\\
&\ll \frac{x}{(\log x)^{1+\delta \varepsilon}}< \frac{ x}{100 \log x},
\end{align*}verifying \eqref{V.2.DOPOLNENIYE}.

We fix arbitrary $n\in V''_{i}$. For such $n$, the inequalities  \eqref{E.2.DEF} and \eqref{T.2.DEF} both fail, and therefore for each $H\in \mathfrak{H}''$,
\begin{align*}
\sum_{q\in \mathcal{T}''_{H}}\sum_{h\leq K H}\lambda ''(H; q, n-\alpha_{q,i}+qh)&=
\bigg(1+ O\bigg(\frac{1}{H^{1+\varepsilon}}\bigg)\bigg) \frac{\#\mathcal{Q}_{H}[KH]}{\sigma_2}\\
&=\bigg(1+ O\bigg(\frac{1}{(\log x)^{\delta(1+\varepsilon)}}\bigg)\bigg) \frac{\#\mathcal{Q}_{H}[KH]}{\sigma_2}.
\end{align*}Summing over all $H\in \mathfrak{H}''$, we have
\begin{align*}
\sum_{q\in \mathcal{T}''}\sum_{h\leq KH_{q}}\lambda ''(H; q, n-\alpha_{q, i}+qh)&=
\sum_{H\in \mathfrak{H}''}\sum_{q\in \mathcal{T}''_{H}}\sum_{h\leq KH_{q}}\lambda''(H; q, n-\alpha_{q, i}+qh)\\
&=\bigg(1+O\bigg(\frac{1}{(\log x)^{\delta (1+\varepsilon)}}\bigg)\bigg)C'' (K+2)y,
\end{align*}where
\[
C''= \frac{1}{(K+2)y}\sum_{H\in \mathfrak{H}''}\frac{\# \mathcal{Q}_{H} [KH]}{\sigma_{2}}.
\] Note that $C''$ depends on $x$, $K$, $M$, $\xi$, and $\delta$, but not on $n$ and $i$. Arguing as above, we obtain
\[
C''\sim \frac{K}{4(K+2) M} \frac{1-1/\xi}{\log \xi} \log \Big(\frac{1}{2\delta}\Big).
\]

Since $\delta < C(1/2)$, we see from \eqref{C.RHO.INEQUALITY} that
\[
C'\geq \frac{10^{2\delta}}{2}  \quad\text{and}\quad C'' \geq \frac{10^{2\delta}}{2}
 \] provided that $(M-6)$ and $(\xi - 1)$ are sufficiently small in terms of $\delta$, $K$ is sufficiently large in terms of $\delta$, $0 < \varepsilon < (M-6)/7$, and $x$ is sufficiently large in terms of $\delta$, $M$, $\xi$, $K$, $\varepsilon$. Also $C' \leq 50$ and $C'' \leq 50$  due to $\delta > 10^{-6}$. So parts \textup{(iii)} and \textup{(iv)} of Proposition \ref{P1} are verified.

 This completes the proof of Proposition \ref{P1} assuming Proposition \ref{P2}. Thus we are left to prove Proposition \ref{P2}. This will be done in the next sections.

 \section{Preparatory Lemmas}

 For $H\in \mathfrak{H}$, let $\mathcal{D}_{H}$ be the collection of square-free numbers $n\in <P_{3}>$, all of whose prime divisors lie in $(H^{M}, z]$. We may assume that $H^{M}> a+|b|$. For each $n\in \mathcal{D}_{H}$, let $I_{n}\subset \mathbb{Z}/ n \mathbb{Z}$ denote the collection of residue classes $a\,\textup{(mod $n$)}$ such that $a\,\textup{(mod $q$)}\in I_{q}$ for all $q|n$. Further, for $A>0$, let
 \begin{equation}\label{E.DEF}
 E_{A}(m; H) = \big(1_{m\neq 0}\big)\sum_{n\in \mathcal{D}_{H}\backslash \{1\}}\frac{A^{\nu (n)}}{n} 1_{m\,(\textup{mod}\,n) \in I_{n} - I_{n}}.
 \end{equation} We note that $E_{A}(m;H)$ is increasing in $A$ and $E_{A}(m; H)= E_{A}(-m; H)$ for all $m\in \mathbb{Z}$. For $n\in \mathcal{D}_{H}\backslash\{1\}$, we see that $m\textup{(mod $n$)}\in I_{n}-I_{n}$ if and only if $m\,\textup{(mod $q$)}\in I_{q}-I_{q}$ for all $q|n$.

 \begin{lemma}\label{L1.PREP}
 Let $10 < H < z ^{1/M}$, $2\leq l \leq 10 KH$, $\mathcal{U}\subset \mathcal{V}$ be two finite sets of integers with $\# \mathcal{V} = l$. Then
 \[
 \mathbb{P} (\mathcal{U} \subset \mathbf{S}'_{2})= \mathbb{P} (\mathcal{U} \subset \mathbf{S}''_{2}) = \sigma_{2}^{\#\mathcal{U}}
 \bigg(1+O \bigg(\frac{\# \mathcal{U}}{H^{M}}\bigg)+ O\bigg(\frac{1}{l^{2}} \sum_{v, v' \in \mathcal{V}}E_{3l^{2}}(v-v'; H)\bigg)\bigg).
 \]
 \end{lemma}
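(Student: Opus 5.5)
The plan is to compute $\mathbb{P}(\mathcal{U}\subset \mathbf{S}'_{2})$ exactly via the Chinese Remainder Theorem and then compare it with $\sigma_2^{\#\mathcal{U}}$ prime by prime. Since $\mathbf{d}_2$ is uniform modulo $P_2=\prod_{H^M<q\le z}q$, its residues modulo the different $q$ are independent and uniform. We have $\mathcal{U}\subset \mathbf{S}'_2$ if and only if, for every $q\in(H^M,z]$, $\mathbf{d}_2 \bmod q$ avoids the set $\mathcal{U}-I_q \pmod q$. Hence
\[
\mathbb{P}(\mathcal{U}\subset \mathbf{S}'_{2})=\prod_{H^M<q\le z}\Big(1-\frac{\nu_q(\mathcal{U})}{q}\Big),\qquad \nu_q(\mathcal{U})=\#\{u-\alpha \ (\mathrm{mod}\ q):\ u\in\mathcal{U},\ \alpha\in I_q\}.
\]
For $\mathbf{S}''_2$ the condition is that $-N-\mathbf{d}_2$ avoids $\mathcal{U}-I_q$. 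Since $-N-\mathbf{d}_2$ is also uniform modulo $P_2$, the same product arises, which gives the equality of the two probabilities.

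Write $u=\#\mathcal{U}$ and $\nu_q=2u-\omega_q$ with $\omega_q\ge 0$. A collision $u_1-\alpha\equiv u_2-\alpha'$ with $(u_1,\alpha)\ne(u_2,\alpha')$ forces $u_1\ne u_2$ (because $0\not\equiv c_q$), and it forces $u_1-u_2 \bmod q\in I_q-I_q$. Thus
\[
\omega_q\le \sum_{\substack{v,v'\in\mathcal{V}\\ v\ne v'}}1_{(v-v')\bmod q\in I_q-I_q}.
\]
The comparison then splits into two parts.
\begin{itemize}
\item The main term $\prod_q(1-2u/q)$ against $\prod_q(1-2/q)^u$. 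The logarithms differ by $O(u^2/q^2)$ for each $q$, and summing over $q>H^M$ gives $O(u^2/H^M)$. To reach the stated $O(u/H^M)$ I would try expanding more carefully; failing that, I would absorb the surplus into the other error terms. This step needs checking against the intended exponent.
\item The collision factor $\prod_q\big(1+\omega_q/(q-2u)\big)$. This is at least $1$, and since $q>H^M\gg l$ it is at most $\prod_q(1+3\omega_q/q)$.
\end{itemize}

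The main obstacle is to show
\[
\prod_q\Big(1+\frac{3\omega_q}{q}\Big)-1\ll \frac{1}{l^2}\sum_{v,v'\in\mathcal{V}}E_{3l^2}(v-v';H).
\]
I would first write $\omega_q\le l^2\cdot\frac{1}{l^2}\sum_{v\ne v'}1_{v-v'\in I_q-I_q}$. I would then pull the average over pairs outside the product using the averaging trick from \cite{FKMPT} (Hölder or convexity), aiming for
\[
\prod_q\Big(1+\frac{1}{l^2}\sum_{v\ne v'}\frac{3l^2\,1_{q}(v-v')}{q}\Big)\le \frac{1}{l^2}\sum_{v\ne v'}\prod_q\Big(1+\frac{3l^2\,1_{q}(v-v')}{q}\Big),
\]
where $1_q(m)=1_{m\bmod q\in I_q-I_q}$. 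I expect this averaging inequality to need the most care. For each fixed pair, expanding the product over squarefree $n\in\mathcal{D}_H$ gives exactly
\[
\sum_{n\in\mathcal{D}_H}\frac{(3l^2)^{\nu(n)}}{n}\,1_{(v-v')\bmod n\in I_n-I_n}=1+E_{3l^2}(v-v';H),
\]
using that $v\ne v'$ and the CRT description of $I_n-I_n$. Subtracting $1$ and combining with the main term, in the form $(1+O(u/H^M))(1+O(\cdot))$ with bounded errors, gives the stated asymptotic. If the errors are not small, the claim is vacuous in $O$-form anyway, provided one uses the trivial upper bound $\prod_q(1+3\omega_q/q)\le 1+\frac1{l^2}\sum E$ derived above.
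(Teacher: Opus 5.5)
Your outline follows the paper's proof closely: the exact CRT product, the same product for $\mathbf{S}''_2$ (since $-N-\mathbf{d}_2$ is again uniform), the collision bound for $\omega_q$, and the splitting into $\prod_q(1-2k/q)$ times $\Delta_q\in[1,1+3\omega_q/q]$. However, the step you single out as the main obstacle is wrong as written.

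\emph{The averaging inequality.} It fails first for normalisation reasons. The weights $1/l^2$ over pairs $v\ne v'$ sum to less than $1$, so if no indicator fires the left side is $1$ and the right side is smaller. Your later ``subtract $1$'' tacitly assumes the weights sum to $1$. More seriously, even with the correct normalisation $1/m$, $m=l^2-l$, and the same parameter $A$ on both sides, ``product of averages $\le$ average of products'' is not a valid inequality. If some pairs are detected only by $q_1$ and others only by $q_2$, the left side exceeds the right by a positive multiple of $1/(q_1q_2)$.

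The missing idea, which is the paper's, is to average at the exponential level and pay a constant factor in $A$. Write $1_q(t)=1_{t \bmod q\in I_q-I_q}$ and $Z_{v,v'}=\exp\big(3\sum_q 1_q(v-v')/q\big)$. Then
\[
\prod_q\Delta_q\le\prod_{v\ne v'}Z_{v,v'}\le\frac1m\sum_{v\ne v'}Z_{v,v'}^{\,m}\le\frac1m\sum_{v\ne v'}\prod_q\Big(1+\frac{6m\,1_q(v-v')}{q}\Big)=1+\frac1m\sum_{v\ne v'}E_{6m}(v-v';H).
\]
The steps are:
\begin{itemize}
\item $1+t\le e^t$;
\item AM--GM in the form $\prod_{i\le m}Z_i\le\frac1m\sum_{i\le m}Z_i^{m}$, which is where $A\asymp l^2$ comes from;
\item $e^t\le 1+2t$ on $[0,1/2]$, legitimate because $3m/q\le 300K^2H^{2-M}$ is small.
\end{itemize}
This constant-factor inflation of $A$ is what pays for the cross terms. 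Since $1/m\le 2/l^2$, you get the $E$-term with a fixed multiple of $l^2$. The exact multiple is immaterial downstream, where only $A\asymp K^2H^2$ matters.

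\emph{The main term.} Your $O(u^2/H^M)$ is the right answer. Neither a finer expansion nor absorption into the $E$-term can bring it down to the printed $O(\#\mathcal{U}/H^M)$, because that version is false. Take $\mathcal{U}=\mathcal{V}=\{1,\dots,l\}$ with $l=[10KH]$. Every difference $t$ has $0<|t|<l$, so for large $H$ no $q>H^M$ divides $t(at+b)(-at+b)$. Hence all $E$-terms vanish, there are no collisions, and
\[
\mathbb{P}(\mathcal{U}\subset\mathbf{S}'_2)=\prod_{H^M<q\le z}(1-2l/q)=\sigma_2^{\,l}\exp\big(-(2+o(1))\,l(l-1)\sum_{H^M<q\le z}q^{-2}\big).
\]
This is a relative deviation of exact order $l^2/(H^M\log H)$, which is not $O(l/H^M)$. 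The paper's own proof actually ends with $O(k^2/H^M)$, and only that is used later, in the form $O(H^{2-M})$ since $k\ll H$. So that is the version to prove, combining the two factors via $k^2/H^M=O(1)$ as you indicate.

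Your closing fallback also relied on the faulty inequality. After the fix, the upper bound $\prod_q\Delta_q\le 1+\frac{2}{l^2}\sum_{v,v'}E_{6l^2}(v-v';H)$, together with $\Delta_q\ge 1$, gives the two-sided estimate.
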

 \begin{proof}
 For each prime $q\in (H^{M}, z]$, let $\mathbf{d}_{2,q}\in \mathbb{Z}/ q \mathbb{Z}$ be the reduction of $\mathbf{d}_{2}$ modulo $q$, thus each $\mathbf{d}_{2,q}$ is uniformly distributed in $\mathbb{Z}/q\mathbb{Z}$ and $\mathbf{d}_{2,q}$ are independent in $q$. Let $\mathcal{U}_{q}$ denote the set of residue classes $\mathcal{U}$ (mod $q$). By the Chinese Remainder Theorem, we have
 \begin{align*}
 \mathbb{P}(\mathcal{U}\subset \mathbf{S}'_{2})&=\prod_{q\in (H^{M}, z]} \mathbb{P}(\mathcal{U}_{q}\cap (\mathbf{d}_{2,q} + I_{q})= \emptyset)\\
 &=\prod_{q\in (H^{M}, z]} (1- \mathbb{P}(\mathbf{d}_{2,q}\in \mathcal{U}_{q} - I_{q}))\\
 &=\prod_{q\in (H^{M}, z]} \bigg(1- \frac{\#(\mathcal{U}_{q}-I_{q})}{q}\bigg).
 \end{align*} Arguing as above, we obtain

 \begin{align*}
 \mathbb{P}(\mathcal{U}\subset \mathbf{S}''_{2})&=\prod_{q\in (H^{M}, z]} \mathbb{P}(\mathcal{U}_{q}\cap ((-N-\mathbf{d}_{2,q}) + I_{q})= \emptyset)\\
 &=\prod_{q\in (H^{M}, z]} (1- \mathbb{P}(\mathbf{d}_{2,q}\in -N+ (I_{q}- \mathcal{U}_{q})))\\
 &=\prod_{q\in (H^{M}, z]} \bigg(1- \frac{\#(-N+(I_{q}- \mathcal{U}_{q}))}{q}\bigg).
 \end{align*}Since
 \[
 \# (-N + (I_{q}-\mathcal{U}_{q}))=\# (I_{q}-\mathcal{U}_{q})=\# (\mathcal{U}_{q}-I_{q}),
 \]we obtain
 \[
 \mathbb{P}(\mathcal{U}\subset \mathbf{S}''_{2}) = \mathbb{P}(\mathcal{U}\subset \mathbf{S}'_{2})=
 \prod_{q\in (H^{M}, z]} \bigg(1- \frac{\#(\mathcal{U}_{q}-I_{q})}{q}\bigg).
 \]

 Let $k=\# \mathcal{U}$. We may crudely estimate the size of the difference set $\mathcal{U}_{q} - I_{q}$ by
 \[
 k\# I_{q} \geq \#(\mathcal{U}_{q} - I_{q})\geq k\# I_{q} - \# I_{q} \sum_{u, u' \in \mathcal{U}, u\neq u'}
 1_{u-u'\, \textup{(mod $q$)} \in I_{q}-I_{q}}.
 \]Since $\# I_{q} = 2$ and $k \leq 10 KH$, we have
 \[
 2k\geq \#(\mathcal{U}_{q} - I_{q})\geq 2k - 2 \sum_{u, u' \in \mathcal{U}, u\neq u'}
 1_{u-u'\, \textup{(mod $q$)} \in I_{q}-I_{q}}
 \]and $2k \leq 20 KH < q/10$ for $x$ large enough. Thus,
 \[
 \bigg(1- \frac{\#(\mathcal{U}_{q} - I_{q})}{q}\bigg) =\bigg(1- \frac{2k}{q}\bigg)
 \bigg(1+ \frac{2k - \#(\mathcal{U}_{q}-I_{q})}{q-2k}\bigg) =
 \bigg(1- \frac{2k}{q}\bigg) \Delta_{q}.
 \]Since $e^{x} \geq 1 + x$, we have
 \begin{align*}
 1\leq \Delta_{q} &\leq 1 + \frac{3}{q} \sum_{u, u' \in \mathcal{U}, u\neq u'}
 1_{u-u'\, \textup{(mod $q$)} \in I_{q}-I_{q}}\\
 &\leq \prod_{u, u' \in \mathcal{U}, u\neq u'}\textup{exp}\bigg(3 \frac{1_{u-u'\, \textup{(mod $q$)} \in I_{q}-I_{q}}}{q} \bigg)\\
 &\leq \prod_{v, v' \in \mathcal{V}, v\neq v'}\textup{exp}\bigg(3 \frac{1_{v-v'\, \textup{(mod $q$)} \in I_{q}-I_{q}}}{q} \bigg).
 \end{align*}Here we have enlarged the summation over pairs of numbers from $\mathcal{V}$.

 By the arithmetic mean - geometric mean inequality, we have
 \begin{align*}
 \prod_{q\in (H^{M}, z]} \Delta_{q}&\leq \prod_{v, v' \in \mathcal{V}, v\neq v'}\prod_{q\in (H^{M}, z]}
 \textup{exp}\bigg(3 \frac{1_{v-v'\, \textup{(mod $q$)} \in I_{q}-I_{q}}}{q} \bigg)\\
 &\leq \frac{2}{l^{2}-l} \sum_{v, v' \in \mathcal{V}, v\neq v'}
 \prod_{q\in (H^{M}, z]}
 \textup{exp}\bigg(3\bigg(\frac{l^{2}-l}{2}\bigg) \frac{1_{v-v'\, \textup{(mod $q$)} \in I_{q}-I_{q}}}{q} \bigg)\\
 &\leq \frac{2}{l^{2}-l} \sum_{v, v' \in \mathcal{V}, v\neq v'}
 \prod_{q\in (H^{M}, z]}
 \textup{exp}\bigg(3\bigg(\frac{l^{2}}{2}\bigg) \frac{1_{v-v'\, \textup{(mod $q$)} \in I_{q}-I_{q}}}{q} \bigg).
 \end{align*} Since $l \leq 10 KH$ and $q> H^{M}> H^6$,
 by \eqref{H_range} we have
 \[
 0< \frac{3 l^{2}}{2q}\leq \frac{150 K^{2}}{H^{4}}\leq \frac{150 K^{2}}{(\log x)^{4\delta}}< \frac{1}{2}
 \]for $x$ large enough. We observe that $e^{x} \leq 1+ 2x$ for $x\in [0, 1/2]$. Hence,
 \begin{align*}
 \prod_{q\in (H^{M}, z]} \Delta_{q} &\leq \frac{2}{l^{2}-l} \sum_{v, v' \in \mathcal{V}, v\neq v'}
 \prod_{q\in (H^{M}, z]}
 \bigg(1+ 3l^{2}\frac{1_{v-v'\, \textup{(mod $q$)} \in I_{q}-I_{q}}}{q} \bigg)\\
 &=\frac{2}{l^{2}-l} \sum_{v, v' \in \mathcal{V}, v\neq v'}(1 + E_{3l^{2}}(v-v'; H))\\
 &= 1 + \frac{2}{l^{2}-l} \sum_{v, v' \in \mathcal{V}} E_{3l^2}(v-v'; H).
 \end{align*}

 It is easy to see that
 \[
 \prod_{H^{M}< q \leq z}\bigg(1-\frac{2k}{q}\bigg) = \prod_{H^{M}<q \leq z} \bigg(1-\frac{2}{q}\bigg)^{k}
 \bigg(1 + O \bigg(\frac{k^{2}}{H^{M}}\bigg)\bigg)=\sigma_{2}^{k}\bigg(1 + O \bigg(\frac{k^{2}}{H^{M}}\bigg)\bigg).
 \]Since $k \leq 10 KH$ and $l\geq 2$, we have $O(k^{2}/H^{M}) = O(1)$ and $l^{2} - l \geq l^{2}/2$. We obtain
 \begin{align*}
 \mathbb{P}(\mathcal{U}\subset \mathbf{S}'_{2}) = \mathbb{P}(\mathcal{U}\subset \mathbf{S}''_{2})&=
 \sigma_{2}^{k}\bigg(1 + O \bigg(\frac{k^{2}}{H^{M}}\bigg)\bigg) \bigg(1 + \frac{2}{l^{2}-l} \sum_{v, v' \in \mathcal{V}} E_{3l^2}(v-v'; H)\bigg)\\
 &=\sigma_{2}^{k}\bigg(1 + O \bigg(\frac{k^{2}}{H^{M}}\bigg)+ O\bigg(\frac{1}{l^{2}}\sum_{v, v' \in \mathcal{V}} E_{3l^2}(v-v'; H)\bigg) \bigg).
 \end{align*}
 \end{proof}

 \begin{lemma}\label{L2:PREP}
 Suppose that $10 < H < z^{1/M}$, and that $\{m_{t}\}_{t\in T}$ is a sequence of integers indexed by a finite set $T$, obeying the bounds
 \begin{equation}\label{m.t.DISTRIBUTION}
 \#\{ t\in T: m_{t}\equiv a\textup{ (mod $n$)}\}\ll \frac{X}{\varphi (n)} + R
 \end{equation}for some $X, R >0$ and all $n\in \mathcal{D}_{H}\backslash \{1\}$ and $a\in \mathbb{Z}/ n \mathbb{Z}$. Then, for any $A$ satisfying $1 \leq A \leq H^{M}$ and any integer $j$, we have
 \[
 \sum_{t\in T} E_{A}(m_{t}+ j; H) \ll X \frac{A}{H^{M}} + R\,\textup{exp} (3 A \log\log y).
 \]
 \end{lemma}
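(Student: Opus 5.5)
We expand the definition \eqref{E.DEF} and interchange summations. Since $E_A(m;H)$ vanishes for $m=0$ and is otherwise a sum over $n\in\mathcal{D}_H\setminus\{1\}$, we get
\[
\sum_{t\in T}E_A(m_t+j;H)\le \sum_{n\in\mathcal{D}_H\setminus\{1\}}\frac{A^{\nu(n)}}{n}\,\#\{t\in T:\ m_t+j \ (\mathrm{mod}\ n)\in I_n-I_n\}.
\]
For squarefree $n$, the set $I_n-I_n$ is a union of at most $\prod_{q|n}\#(I_q-I_q)\le 3^{\nu(n)}$ residue classes modulo $n$. Indeed, $I_q-I_q=\{0,c_q,-c_q\}$, so there are exactly $3$ classes mod $q$, and the Chinese Remainder Theorem combines them. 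Applying \eqref{m.t.DISTRIBUTION} to each shifted class $a-j$, the inner count is
\[
\ll 3^{\nu(n)}\Big(\frac{X}{\varphi(n)}+R\Big).
\]
So the sum splits into a main part $X\sum_n (3A)^{\nu(n)}/(n\varphi(n))$ and an error part $R\sum_n (3A)^{\nu(n)}/n$, both over $n\in\mathcal{D}_H\setminus\{1\}$.

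\textbf{Main part.} Write it as an Euler product over primes $q\in(H^M,z]$ with $q\equiv 3 \pmod 4$, subtracting the $n=1$ term:
\[
\prod_{H^M<q\le z}\Big(1+\frac{3A}{q(q-1)}\Big)-1.
\]
We have
\[
\sum_{q>H^M}\frac{3A}{q(q-1)}\ll \frac{A}{H^M\log H^M}\le \frac{1}{\log H}\ll 1,
\]
using $A\le H^M$. Hence the product minus $1$ is $\ll \exp(O(1))\cdot A/H^M\ll A/H^M$. This uses $e^u-1\ll u$ for $u\ll 1$, which applies since the exponent is $O(1)$. This gives the term $X A/H^M$.

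\textbf{Error part.} Here we bound
\[
\sum_n\frac{(3A)^{\nu(n)}}{n}\le \prod_{H^M<q\le z}\Big(1+\frac{3A}{q}\Big)\le \exp\Big(3A\sum_{q\le z}\frac1q\Big).
\]
By Mertens' theorem, $\sum_{q\le z}1/q\le \log\log z+O(1)$. Since only $q\equiv 3\pmod 4$ occur, the sum is in fact about $\tfrac12\log\log z$, which gives room to spare. As $z\le y$, the exponent is at most $3A\log\log y$ once the constant term is absorbed: it is enough that $\sum_{q\le z}1/q\le \log\log y$, which holds for large $x$ because of the factor $\tfrac12$. This yields $R\exp(3A\log\log y)$.

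The only delicate point is ensuring that the implied constant in the error part does not grow like $e^{O(A)}$. The restriction to primes $\equiv 3\pmod 4$, which halves $\sum 1/q$, disposes of this cleanly. The main part is routine given $A\le H^M$.
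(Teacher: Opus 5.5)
Your proposal is correct and follows the paper's proof. Like the paper, you expand $E_A$, apply the distribution hypothesis class by class, bound the two resulting Euler products, and use the halved Mertens sum over primes $q\equiv 3$ (mod $4$) to avoid an $e^{O(A)}$ loss in the exponent. The only cosmetic difference is that you count $\#(I_n-I_n)=3^{\nu(n)}$ exactly, whereas the paper uses the cruder $4^{\nu(n)}$ and therefore needs $\sum_{q\le z}1/q\le \frac34\log\log y$ rather than $\le \log\log y$.
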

 \begin{proof}By the Chinese Remainder Theorem, for any $n\in \mathcal{D}_{H}$ we have
 \[
 \# I_{n} = \prod_{q|n}\# I_{q} = 2^{\nu (n)}.
 \]Hence,
 \[
 \# (I_{n} - I_{n})\leq 4^{\nu (n)}.
 \]From \eqref{E.DEF} and \eqref{m.t.DISTRIBUTION} we have
 \begin{align*}
 \sum_{t\in T} E_{A}(m_{t}+ j; H)&=\sum_{n\in \mathcal{D}_{H}\backslash \{1\}} \frac{A^{\nu (n)}}{n}
 \sum_{a\in I_{n}-I_{n}} \#\{t\in T: m_{t}+j\neq 0, m_{t}+j\equiv a\textup{ (mod $n$)}\}\\
 & \ll \sum_{n\in \mathcal{D}_{H}\backslash \{1\}} \frac{(4 A)^{\nu (n)}}{n}\bigg(\frac{X}{\varphi (n)}+ R\bigg).
 \end{align*}Using Euler products and the inequality $1+x \leq e^{x}$,  we have
 \begin{align*}
 \sum_{n\in \mathcal{D}_{H}} \frac{(4A)^{\nu (n)}}{n\varphi (n)}&=
 \prod_{q\in (H^{M}, z]}\bigg(1 + \frac{4A}{q^{2}- q}\bigg)\leq
 \prod_{q\in (H^{M}, z]}\bigg(1 + \frac{8A}{q^{2}}\bigg)\\
 &\leq \textup{exp}\bigg(8A \sum_{H^{M}< q \leq z} \frac{1}{q^{2}} \bigg)\leq
 \textup{exp}\bigg(\frac{16A}{H^{M}}\bigg)= 1 + O\bigg(\frac{A}{H^{M}}\bigg).
 \end{align*}Since
 \[
 \sum_{q \leq z}\frac{1}{q} \sim \frac{1}{2} \log\log z\leq \frac{3}{4}\log\log y,
 \]we have
 \[
 \sum_{n\in \mathcal{D}_{H}} \frac{(4A)^{\nu (n)}}{n} = \prod_{q\in (H^{M}, z]}\bigg(1 + \frac{4A}{q}\bigg)
 \leq \textup{exp}\bigg(4A \sum_{H^{M}< q\leq z}\frac{1}{q} \bigg)
 \leq \textup{exp}(3A \log \log y).
 \]

 \end{proof}

 For non-zero $v$, define
 \begin{equation}\label{N.DEF}
 N(v)=\#\{q: v\textup{ (mod $q$)}\in I_{q} - I_{q}\}.
 \end{equation}
 \begin{lemma}\label{L3:Prep}
  Let $u \geq 10$, $w$ and $k$ are integers with $|w|\leq u$ and $k\geq 1$. Suppose that $\Omega$ is a subset of $\mathbb{N}$, and for each prime $q\in \Omega$, let $m_{q}$ be a non-zero integer with $m_{q}\textup{ (mod $q$)}\in I_{q}-I_{q}$ and $|m_{q}|\leq u$. Then
 \[
 \#\{q\in \Omega: m_{q}+w \neq 0, N(m_{q}+w) = k\}\ll 2^{-k/3}u(\log u)^{2}.
 \]
 \end{lemma}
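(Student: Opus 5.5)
The plan is to reduce the count over primes $q\in\Omega$ to a count over the integers $n=m_q+w$, and then bound the latter by a moment estimate for $2^{N(n)/3}$.

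\emph{Step 1: describe $N(v)$ through prime divisors.} For a prime $q$ in our range we have $I_q-I_q=\{0,c_q,-c_q\}$ modulo $q$. Since $ac_q+b\equiv 0 \pmod q$ and $(a,q)=1$:
\begin{itemize}
\item $v\equiv 0 \pmod q$ means $q\mid v$;
\item $v\equiv c_q \pmod q$ means $q\mid av+b$;
\item $v\equiv -c_q \pmod q$ means $q\mid b-av$.
\end{itemize}
Hence $N(v)\le \nu(v)+\nu(av+b)+\nu(b-av)$. For $v\neq 0$, all three numbers $v$, $av+b$, $b-av$ are non-zero, because $a\nmid b$. So $N(v)$ is finite, and in fact $N(v)\ll \log(|v|+2)$.

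\emph{Step 2: fibres of the map $q\mapsto m_q+w$.} Consider the map $q\mapsto n:=m_q+w$ on the set being counted. Its image lies in $\{n: 0<|n|\le 2u,\ N(n)=k\}$. Fix a value $n$ in the image. Every $q$ in its fibre has $m_q=n-w=:m$, a fixed non-zero integer with $|m|\le u$. The hypothesis $m_q \pmod q\in I_q-I_q$ then says that $q$ is counted by $N(m)$. So the fibre has size at most $N(m)\ll \log u$. Consequently
\[
\#\{q\in\Omega: m_q+w\neq0,\ N(m_q+w)=k\}\ll \log u\cdot \#\{0<|n|\le 2u:\ N(n)=k\}.
\]

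\emph{Step 3: count integers with $N(n)=k$.} By Markov's inequality,
\[
\#\{0<|n|\le 2u: N(n)=k\}\le 2^{-k/3}\sum_{0<|n|\le 2u}2^{N(n)/3}.
\]
By Step 1,
\[
2^{N(n)/3}\le \bigl(2^{\nu(n)}\bigr)^{1/3}\bigl(2^{\nu(an+b)}\bigr)^{1/3}\bigl(2^{\nu(b-an)}\bigr)^{1/3}.
\]
Apply Hölder's inequality with three exponents equal to $3$. This bounds the sum by the geometric mean of the three sums $\sum 2^{\nu(L(n))}$, where $L$ runs over the forms $n$, $an+b$, $b-an$. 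Each of these sums is at most $\sum_{1\le |m|\ll u}\tau(|m|)\ll u\log u$, using $2^{\nu(m)}\le\tau(m)$, the fact that $|L(n)|\ll u$, and the injectivity of $n\mapsto L(n)$. Hence
\[
\#\{0<|n|\le 2u: N(n)=k\}\ll 2^{-k/3}u\log u .
\]
Combining this with Step 2 gives the bound $2^{-k/3}u(\log u)^2$.

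\emph{Expected obstacle.} The only delicate point is Step 2: it uses that the fibre over $n$ is controlled by $N(n-w)$ for a single non-zero integer. This is exactly where the hypothesis $m_q \pmod q\in I_q-I_q$ and the non-vanishing of $a m\pm b$ (from $a\nmid b$) enter. Everything else is a standard divisor-sum estimate, and the exponent $1/3$ in the statement is explained by the three-fold Hölder split.
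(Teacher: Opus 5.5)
Your proof is correct and follows the paper's argument: the fibre over each value $n=m_q+w$ has at most $N(n-w)\ll\log u$ elements, since each such $q$ divides $F(m_q)=m_q(am_q+b)(-am_q+b)\neq 0$. The number of $0<|n|\le 2u$ with $N(n)=k$ is then bounded by $\ll 2^{-k/3}u\log u$ through divisor sums over the three linear forms. The only difference is in that last step: the paper pigeonholes (some factor $F_i(n)$ has at least $k/3$ prime divisors) and applies Markov to each $\tau(|F_i(n)|)$ separately, whereas you apply Markov to $2^{N(n)/3}$ and split with Hölder; both give the same bound.
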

 \begin{proof}Let us denote
 \[
 \Omega_{1} = \{q\in \Omega: m_{q}+w\neq 0, N(m_{q}+w)=k\},\qquad T=\#\Omega_{1}.
 \]

Let
\[
F(n) = n (an+b)(-an+b)=F_{1}(n)F_{2}(n)F_{3}(n).
\]Since $a\nmid b$, the linear form $ax+b \neq 0$ for all $x\in \mathbb{Z}$. Therefore $F(n)\neq 0$ for $n\neq 0$. Also, if $a\textup{ (mod $q$)}\in I_{q}- I_{q}$, then $F(a)\equiv 0$ (mod $q$).

 If $q\in \Omega_{1}$ and $m_{q}+w = m$, then $0<|m|\leq 2 u$, $m\neq w$, and $N(m)=k$. We observe that if $m$ satisfies $0< |m|\leq 2 u$ and $m\neq w$, then there are $O(\log u)$ primes dividing $F(m-w)$.  Thus, for any $m$ satisfying $0< |m|\leq 2 u$, $m\neq w$, and $N(m)=k$, there are $O(\log u)$ primes $q\in \Omega_{1}$ such that $m_{q}+w = m$. We obtain
\begin{align*}
T &\ll \#\{m: 0<|m|\leq 2u, m\neq w, N(m)=k\}\log u\\
&\leq \#\{m: 0<|m|\leq 2u, N(m)=k\}\log u = T_{1}\log u.
\end{align*}

If $m$ is an integer such that $0<|m|\leq 2u$ and  $N(m)=k$, then there are $k$ distinct primes $q$ dividing $F(m)$, and therefore, for some $i\in\{1, 2,3\}$, at least $k/3$ distinct primes $q$ dividing $F_{i}(m)$. Hence,
\begin{align*}
T_{1}&\leq \sum_{i=1}^{3}\#\Big\{m: 0<|m|\leq 2 u, \nu (|F_{i}(m)|)\geq \frac{k}{3}\Big\}\\
&\leq \sum_{i=1}^{3}\#\Big\{m: 0<|m|\leq 2 u, \tau (|F_{i}(m)|)\geq 2^{k/3}\Big\}\\
&\leq \sum_{i=1}^{3} 2^{-k/3}\sum_{0< |m|\leq 2u}\tau (|F_{i}(m)|)\ll 2^{-k/3} u \log u.
\end{align*}We obtain $T \ll 2^{-k/3} u (\log u)^{2}$.
 \end{proof}

 \begin{lemma}\label{L4:PREP}
  Let $H\in \mathfrak{H}$. For $q\in \mathcal{Q}_{H}$, suppose that $m_{q}\textup{ (mod $q$)}\in I_{q}-I_{q}$ with $0< |m_{q}|\leq x\log x$, and suppose that $w \in \mathbb{Z}$ with $|w|\leq x\log x$. Then
 \[
 \sum_{q\in\mathcal{Q}_{H}}E_{64K^{2}H^{2}} (m_{q}+w; H)\ll \frac{\#\mathcal{Q}_{H}\log H}{H^{M-2}}.
 \]
 \end{lemma}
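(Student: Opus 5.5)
Set $A=64K^2H^2$ and $u=2x\log x$, so that each $m_q+w$ satisfies $|m_q+w|\le u$. The plan follows \cite[Lemma 5.4]{FKMPT}: split according to the value $k=N(m_q+w)$ from \eqref{N.DEF}. Terms with $m_q+w=0$ vanish because of the factor $1_{m\neq0}$, so assume $m:=m_q+w\neq 0$.

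\emph{Step 1 (bound each term by $N(m)$).} If $n\in\mathcal{D}_H\setminus\{1\}$ contributes to $E_A(m;H)$, then every prime $q'\mid n$ satisfies $m \pmod{q'}\in I_{q'}-I_{q'}$ and $q'>H^M$. Hence $n$ is a product of $j\ge1$ distinct primes among the at most $N(m)$ primes $q'>H^M$ with this property. Using $n>H^{Mj}$, we get
\[
E_A(m;H)\le \sum_{j=1}^{N(m)}\binom{N(m)}{j}\Big(\frac{A}{H^M}\Big)^j\le \Big(1+\frac{A}{H^M}\Big)^{N(m)}-1 .
\]
Since $A/H^M=64K^2H^{2-M}$ is small, this is $\ll N(m)\,\frac{A}{H^M}\exp\big(N(m)A/H^M\big)$, which gives
\[
E_A(m;H)\ll \min\Big(k\,\frac{H^2}{H^M}\,e^{64K^2kH^{2-M}},\ (1+64K^2H^{2-M})^k\Big),\qquad k=N(m).
\]

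\emph{Step 2 (count primes with a given $k$).} Lemma~\ref{L3:Prep} applies with $\Omega=\mathcal{Q}_H$, our $m_q$ and $w$, and $u=2x\log x$. It gives
\[
\#\{q\in\mathcal{Q}_H: N(m_q+w)=k\}\ll 2^{-k/3}\,x(\log x)^3 .
\]
We also have the trivial bound $\#\mathcal{Q}_H$, which by \eqref{QH.NU.ASYMPT} is $\asymp y/(H\log x)$.

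\emph{Step 3 (combine).} Pick a threshold $k_0\asymp\log\log x$ (for example $k_0=C\log\log x$ with $C$ large).
\begin{itemize}
\item For $k\le k_0$, use the trivial count $\#\mathcal{Q}_H$ together with $E\ll k\,H^{2-M}$ from Step 1 (the exponential factor is $O(1)$ here since $H\ge(\log x)^\delta$). The result is $\ll \#\mathcal{Q}_H\,H^{2-M}k_0$.
\item For $k>k_0$, use Step 2 and the bound $E\le(1+64K^2H^{2-M})^k\le 2^{k/6}$. The sum is $\ll x(\log x)^3\sum_{k>k_0}2^{-k/6}$. Taking $C$ large makes this $\ll x(\log x)^{-10}$, which is negligible compared with $\#\mathcal{Q}_H H^{2-M}\gg y(\log x)^{-1-(M-1)/2}$ (recall $H\le(\log x)^{1/2}$).
\end{itemize}

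\emph{Main obstacle.} With $k_0\asymp\log\log x$, the first range gives $\log\log x$ rather than $\log H$. To get $\log H$, the small-$k$ range must be refined. Split $k\le k_0$ into $k\le C'\log H$, which contributes $\ll \#\mathcal{Q}_H H^{2-M}\log H$, and $C'\log H<k\le k_0$. In the second part, bound the number of $q$ with $N(m_q+w)=k$ more sharply, using only primes $>H^M$. If $m$ has $k$ prime "hits" all exceeding $H^M$, then $|F(m)|>H^{Mk}$, and $|F(m)|\le (x\log x)^{3}$ up to constants; so only $k\ll\log x/(M\log H)$ can occur. A divisor-moment estimate restricted to such primes should then give the extra saving $2^{-k/3}$ against $\#\mathcal{Q}_H$.

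This is where I expect the main work to lie: the individual term estimate has to be matched to the counting bound so that the sum is dominated by $k\lesssim\log H$. If the direct comparison fails, the fallback is to follow FKMPT's proof verbatim, using Lemma~\ref{L3:Prep} with the weight $(1+A/H^M)^k-1\le kAH^{-M}2^{k/6}$ summed over all $k$. That sum works out because $\sum_k k\,2^{-k/6}$ converges and because $x(\log x)^3$ is compared with $\#\mathcal{Q}_H$ only after restricting to $k\gg\log H$.
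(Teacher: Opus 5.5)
Your Steps 1--3 are essentially the paper's proof, and they already prove the lemma. The ``main obstacle'' in your last paragraph does not exist. For $H\in\mathfrak{H}$, \eqref{H_range} gives $H\ge(\log x)^{\delta}$, i.e.\ $\log\log x\le\delta^{-1}\log H$, and implied constants are allowed to depend on $\delta$. So your small-$k$ contribution $\ll\#\mathcal{Q}_H H^{2-M}k_0$ with $k_0=C\log\log x$ is already $\ll\#\mathcal{Q}_H H^{2-M}\log H$. You even use $H\ge(\log x)^{\delta}$ in that same bullet to make the exponential factor $O(1)$.

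The paper makes the identical split, only writing the threshold as $c_1\log H$. Below it, it uses the trivial count of $q$ together with $E_A\le\exp(Ac_1\log H/H^{M})-1$. Above it, it applies Lemma~\ref{L3:Prep} with $u=x\log x$, giving a tail $\ll x(\log x)^3H^{-\alpha c_1/2}\le x(\log x)^{3-\delta\alpha c_1/2}$. This is smaller than $\#\mathcal{Q}_HH^{2-M}\gg x(\log x)^{-4}$ once $c_1$ is large in terms of $\delta$. Both arguments therefore rest on the same inequality $H\ge(\log x)^{\delta}$; the paper uses it in the tail, you use it in the main range.

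So you should simply drop the last paragraph. The suggested refinement is unnecessary, and the bound $k\ll\log x/(M\log H)$ coming from $|F(m)|>H^{Mk}$ is far too weak to help anyway. The fallback as literally stated would fail. Summing the count from Lemma~\ref{L3:Prep} against $kAH^{-M}2^{k/6}$ over \emph{all} $k$ only yields $O\big(x(\log x)^{3}H^{2-M}\big)$. The target is $\#\mathcal{Q}_HH^{2-M}\log H\asymp x(\log x)^{\delta-1}H^{1-M}\log H$, so this misses by a factor of about $(\log x)^{4-\delta}H/\log H$. Lemma~\ref{L3:Prep} is only affordable on the tail $k\gg\log\log x$, where $2^{-k/6}$ supplies a large negative power of $\log x$, and that is exactly what your Step 3 already does.
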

 \begin{proof} In this proof by $p$ we denote a prime such that $p\equiv 3$ (mod $4$) and $p>a+|b|$. Let us denote $A=64 K^2 H^2$. If $m_{q}+w\neq 0$, then
 \begin{align*}
 E_{A}(m_{q}+w; H)&\leq \prod_{\substack{m_{q}+w\textup{\,(mod $p$)}\in I_{p}-I_{p}\\ H^{M}<p\leq z}}
 \bigg(1+\frac{A}{p}\bigg)-1\\
 &\leq\textup{exp}\Bigg(A\sum_{\substack{m_{q}+w\textup{\,(mod $p$)}\in I_{p}-I_{p}\\ H^{M}<p\leq z}}\frac{1}{p}\Bigg)-1.
 \end{align*}

 Recalling the definition \eqref{N.DEF}, we see that the number of primes $p$ such that $H^{M}<p\leq z$ and $m_{q}+w\textup{ (mod $p$)}\in I_{p}-I_{p}$ is at most $N(m_{q}+w)$. Let $c_1$ be a sufficiently large constant. We put
 \begin{align*}
 \Omega_{1}&=\{q\in \mathcal{Q}_{H}: m_{q}+w\neq 0, N(m_{q}+w)\leq c_{1}\log H\},\\
 \Omega_{2}&=\{q\in \mathcal{Q}_{H}: m_{q}+w\neq 0, N(m_{q}+w)> c_{1}\log H\},
  \end{align*}and
  \[
  S_{1}=\sum_{q\in \Omega_{1}}E_{A}(m_{q}+w; H),\qquad S_{2}=\sum_{q\in \Omega_{2}}E_{A}(m_{q}+w; H).
  \]Since $E_{A}(m; H) = 0$ if $m=0$, we obtain
  \[
  \sum_{q\in\mathcal{Q}_{H}}E_{A} (m_{q}+w; H)=
  \sum_{\substack{q\in\mathcal{Q}_{H}\\ m_{q}+w \neq 0}}E_{A} (m_{q}+w; H)=S_{1}+S_{2}.
  \]

 For $q\in \Omega_{1}$, we have
 \[
 \sum_{\substack{m_{q}+w\textup{\,(mod $p$)}\in I_{p}-I_{p}\\ H^{M}<p\leq z}}\frac{1}{p}\leq \frac{c_{1}\log H}{H^{M}}.
 \]We obtain
 \[
 S_{1}\leq\# \Omega_{1}
 \bigg(\textup{exp}\Big(\frac{c_{2}\log H}{H^{M-2}}\Big)-1\bigg)\ll \frac{\# \mathcal{Q}_{H} \log H}{H^{M-2}},
 \]where $c_2 = 64 K^2 c_1$.

 Using Lemma \ref{L3:Prep} with $\Omega = \mathcal{Q}_{H}$ and $u=x\log x$ and setting $\alpha = (\log 2)/3$, we have
 \begin{align*}
 S_{2}&\leq \sum_{k>c_{1}\log H}\#\{q\in\mathcal{Q}_{H}: m_{q}+w\neq 0, N(m_{q}+w)=k\}e^{Ak/H^{M}}\\
 &\ll x (\log x)^{3}\sum_{k> c_{1}\log H}e^{-\alpha k}\textup{exp}\Big(\frac{64K^{2}k}{H^{M-2}}\Big)
 \ll x (\log x)^{3} \textup{exp}\Big(-\frac{\alpha c_{1}\log H}{2}\Big).
 \end{align*}From \eqref{SETUP:Def.y}, \eqref{H_range}, and \eqref{QH.NU.ASYMPT} we obtain
 \[
 \frac{\#\mathcal{Q}_{H}}{H^{M-2}}\gg \frac{x}{H^{M-1}(\log x)^{1-\delta}}
 \geq \frac{x}{H^{6}(\log x)^{1-\delta}}\geq \frac{x}{(\log x)^{4-\delta}}
 \geq \frac{x}{(\log x)^{4}}.
 \]Also,
 \[
 \frac{x(\log x)^{3}}{H^{(\alpha c_{1})/2}}
 \leq \frac{x(\log x)^{3}}{(\log x)^{(\delta\alpha c_{1})/2}}< \frac{x}{(\log x)^{4}},
 \]if $c_{1}$ is sufficiently large. Hence,
 \[
 S_{2}\ll \frac{\#\mathcal{Q}_{H}}{H^{M-2}} < \frac{\#\mathcal{Q}_{H}\log H}{H^{M-2}}.
 \]

 \end{proof}

 \section{The proof of part \textup{(i)} of Proposition \ref{P2}}
 It is easy to see that, for any $n\in \mathbb{Z}$,
 \[
 \mathbb{P}(n\in \mathbf{S}')=\mathbb{P}(n\in \mathbf{S}'')=\prod_{q\leq z} \bigg(\frac{q-2}{q}\bigg)=\sigma.
 \]We obtain
 \[
 \mathbb{E} \#(\mathbf{S}' \cap [1,y]) = \sum_{1\leq n \leq y} \mathbb{P}(n\in \mathbf{S}')=\sigma y
 \]and
 \[
 \mathbb{E} \#(\mathbf{S}'' \cap [-y,-1]) = \sum_{-y\leq n \leq -1} \mathbb{P}(n\in \mathbf{S}'')=\sigma y.
 \]

 Now we consider the second equation in \eqref{S.1.OSNOVA}. Here we decompose $\mathbf{S}'$ as $\mathbf{S}' = \mathbf{S}'_{1} \cap \mathbf{S}'_{2}$ using \eqref{DECOMP.1} and \eqref{DECOMP.2} with
 \[
 H=\frac{1}{4}(\log y)^{1/M}.
 \] By the Prime Number Theorem,
 \[
 P_{1}\leq \textup{exp}((1+ o(1))H^{M})\leq y ^{1/10}.
 \]By linearity of expectation,
 \begin{align*}
 \mathbb{E} \# (\mathbf{S}' \cap [1,y])^{2} &= \sum_{n_{1}, n_{2} \leq y} \mathbb{P} (n_{1}, n_{2} \in \mathbf{S}')\\
 &= \sum_{n_{1}, n_{2}\leq y}\mathbb{P}(n_{1}, n_{2}\in \mathbf{S}'_{1})\mathbb{P}(n_{1}, n_{2}\in \mathbf{S}'_{2}).
 \end{align*} Observe that the probability $\mathbb{P}(n_{1}, n_{2} \in \mathbf{S}'_{1})$ depends only on the reductions $l_1 \equiv n_{1}$ (mod $P_{1}$), $l_{2}\equiv n_{2}$ (mod $P_{1}$). We obtain
 \[
 \mathbb{E} \# (\mathbf{S}' \cap [1,y])^{2} = \sum_{1 \leq l_{1}, l_{2} \leq P_{1}} \mathbb{P}(l_{1}, l_{2} \in \mathbf{S}'_{1}) \sum_{\substack{1\leq n_{1}, n_{2}\leq y\\ n_{1} \equiv l_{1}\textup{\,(mod $P_{1}$)}\\n_{2}\equiv l_{2}\textup{\,(mod $P_{1}$)}}} \mathbb{P}(n_{1}, n_{2} \in \mathbf{S}'_{2}).
 \]

 By Lemma \ref{L1.PREP} with $\mathcal{U} = \mathcal{V} =\{n_{1}, n_{2}\}$ and $l=2$, we have
 \[
 \mathbb{P}(n_{1}, n_{2}\in \mathbf{S}'_{2}) = \sigma_{2}^{2}\Big( 1+ O\Big(\frac{1}{H^{M}}\Big)+ O\Big(E_{12}(n_{1}-n_{2}; H)\Big)\Big).
 \]We obtain
 \begin{align*}
 \mathbb{E} \# (\mathbf{S}' \cap [1,y])^{2}&=\sigma_{2}^{2}\Big(1+O\Big(\frac{1}{\log y}\Big)\Big)\sum_{1 \leq l_{1}, l_{2} \leq P_{1}} \mathbb{P}(l_{1}, l_{2} \in \mathbf{S}'_{1})
 \Big(\frac{y}{P_{1}}+ O(1)\Big)^{2}\\
 &+O\bigg(\sigma_{2}^{2}\sum_{1 \leq l_{1}, l_{2} \leq P_{1}} \mathbb{P}(l_{1}, l_{2} \in \mathbf{S}'_{1})
 \sum_{\substack{1\leq n_{1}, n_{2}\leq y\\ n_{1} \equiv l_{1}\textup{\,(mod $P_{1}$)}\\n_{2}\equiv l_{2}\textup{\,(mod $P_{1}$)}}}E_{12}(n_{1}-n_{2};H)\bigg).
 \end{align*}

 It is easy to see that
  \[
 \sum_{1 \leq l_{1}, l_{2} \leq P_{1}} \mathbb{P}(l_{1}, l_{2} \in \mathbf{S}'_{1})
 = \mathbb{E}\#(\mathbf{S}'_{1}\cap [1, P_{1}])^{2} = (\sigma_{1}P_{1})^{2},
 \]since $\#(\mathbf{S}'_{1} \cap [1, P_{1}]) = \sigma_{1}P_{1}$ always. Hence,
 \[
 \sigma_{2}^{2}\Big(1+O\Big(\frac{1}{\log y}\Big)\Big)\sum_{1 \leq l_{1}, l_{2} \leq P_{1}} \mathbb{P}(l_{1}, l_{2} \in \mathbf{S}'_{1})
 \Big(\frac{y}{P_{1}}+ O(1)\Big)^{2} = (\sigma y)^{2}\Big(1+ O\Big(\frac{1}{\log y}\Big)\Big).
 \]

 Fix $1 \leq l_{1}, \l_{2}\leq P_{1}$ and $1 \leq n_{2} \leq y$ with $n_{2}\equiv l_{2}$ (mod $P_{1}$). For any $n\in \mathcal{D}_{H}\backslash \{1\}$ and residue class $a$ (mod $n$) we have
 \[
 \#\{1 \leq n_{1}\leq y: n_{1} \equiv l_{1}\textup{ (mod $P_{1}$)}, n_{1}\equiv a\textup{ (mod $n$)}\}
 \ll \frac{y}{P_{1}n} + 1\leq \frac{y/P_{1}}{\varphi (n)}+1.
 \]Applying Lemma \ref{L2:PREP} with $j=-n_{2}$, we obtain
 \[
 \sum_{\substack{1\leq n_{1}\leq y\\n_{1}\equiv l_{1}\textup{ (mod $P_{1}$)}}}E_{12}(n_{1}-n_{2};H)
 \ll \frac{y/P_{1}}{H^{M}} + \textup{exp} (36 \log\log y)
 \ll \frac{y}{P_{1}H^{M}}\ll \frac{y}{P_{1}\log y}.
 \]Therefore
 \[
 \sum_{\substack{1\leq n_{1}, n_{2}\leq y\\ n_{1} \equiv l_{1}\textup{\,(mod $P_{1}$)}\\n_{2}\equiv l_{2}\textup{\,(mod $P_{1}$)}}}E_{12}(n_{1}-n_{2};H)=
 \sum_{\substack{1\leq n_{2}\leq y\\n_{2}\equiv l_{2}\textup{ (mod $P_{1}$)}}}
 \sum_{\substack{1\leq n_{1}\leq y\\n_{1}\equiv l_{1}\textup{ (mod $P_{1}$)}}}E_{12}(n_{1}-n_{2};H)
 \ll \frac{y^{2}}{P_{1}^{2}\log y}.
 \]We see that
 \[
 \sigma_{2}^{2}\sum_{1 \leq l_{1}, l_{2} \leq P_{1}} \mathbb{P}(l_{1}, l_{2} \in \mathbf{S}'_{1})
 \sum_{\substack{1\leq n_{1}, n_{2}\leq y\\ n_{1} \equiv l_{1}\textup{\,(mod $P_{1}$)}\\n_{2}\equiv l_{2}\textup{\,(mod $P_{1}$)}}}E_{12}(n_{1}-n_{2};H)\ll \sigma_{2}^{2}\frac{y^{2}}{P_{1}^{2}\log y} (\sigma_{1}P_{1})^{2}=\frac{(\sigma y)^{2}}{\log y}.
 \]

 The second statement in \eqref{S.2.OSNOVA} is proved similarly. This completes the proof of part \textup{(i)} of Proposition \ref{P2}.

 \section{The proof of part \textup{(ii)} of Proposition \ref{P2}}

Fix $H\in \mathfrak{H}'$. Let us consider the case $j=1$ in \eqref{L1:lambda}, which is
\begin{equation}\label{ii.BASIC.j.1}
\mathbb{E}\sum_{q\in \mathcal{Q}_{H}} \sum_{-(K+1)y< n\leq y}\bm{\lambda}'(H; q, n)= \bigg(1 +
 O\bigg(\frac{\log H}{H^{M-2}}\bigg)\bigg) (K+2)y\# \mathcal{Q}_{H}.
\end{equation}By \eqref{Def_lambda}, the left-hand side expands as
\[
\mathbb{E}\sum_{q\in \mathcal{Q}_{H}} \sum_{-(K+1)y< n\leq y}
\frac{1_{\mathbf{AP}'(KH; q, n)\subset \mathbf{S}'_{2}}}{\sigma_{2}^{\# \mathbf{AP}'(KH; q, n)}}=T.
\]Recall that, according to the definitions \eqref{DECOMP.1} and \eqref{DECOMP.2}, $\mathbf{d}_{1}$ and $\mathbf{d}_{2}$ are independent, and so are $\mathbf{AP}'(KH; q, n)$ and $\mathbf{S}'_{2}$. With $d_{1}$ fixed, $\mathbf{AP}'(KH; q, n)$ is also fixed and we will denote it as $\textup{AP}(KH;q, n)$. We see that
\[
T=\sum_{q\in \mathcal{Q}_{H}} \sum_{-(K+1)y< n\leq y}
\sum_{d_{1}\textup{(mod $P_{1}$)}}
\frac{\mathbb{P}(\mathbf{d}_{1}= d_{1})}{\sigma_{2}^{\# \textup{AP}'(KH; q, n)}} \mathbb{P}\big(\textup{AP}'(KH;q,n)\subset \mathbf{S}'_{2} \big).
\]

Fix $q$, $n$, and $d_{1}$. We are going to apply Lemma \ref{L1.PREP} with $\mathcal{U}=\textup{AP}(KH;q,n)$ and
\[
\mathcal{V}=\bigsqcup_{i=1}^{2}\{n+\alpha_{q,i}+ qh: 1\leq h \leq KH\}.
\] Thus, $l=2[KH]\asymp H$. Since $E_{A}(m;h)$ is increasing in $A$, we obtain
\begin{align*}
&\mathbb{P}\big(\textup{AP}'(KH;q,n)\subset \mathbf{S}'_{2} \big) \\
&= \sigma_{2}^{\#\textup{AP}(KH;q,n)}
\bigg(1+ O\bigg(\frac{1}{H^{M-2}}\bigg)+ O\bigg(\frac{1}{H^{2}}\sum_{\alpha\in I_{q}-I_{q}} \sum_{1\leq h, h' \leq KH}E_{64K^{2}H^{2}} (\alpha+qh-qh';H)\bigg)\bigg).
\end{align*} Thus,
\[
T= \bigg(1+ O\bigg(\frac{1}{H^{M-2}}\bigg)\bigg)(K+2)y\#\mathcal{Q}_{H} + T_{1},
\]where
\[
T_{1}=O\bigg(\frac{y}{H^{2}}\sum_{q\in \mathcal{Q}_{H}}\sum_{\alpha\in I_{q}-I_{q}} \sum_{1\leq h, h' \leq KH}E_{64K^{2}H^{2}} (\alpha+qh-qh';H)\bigg).
\]We observe that $\#(I_{q}-I_{q}) = 3$ and that $|\alpha + qh - qh'| \leq (K+1)y\leq x\log x$ for $\alpha\in I_{q}-I_{q}$ and large $x$. We recall that $E_{A}(m;H)=0$ for $m=0$. Fix $h$ and $h'$. We apply Lemma \ref{L4:PREP} with $w=0$ and $m_{q}^{(i)}=\alpha_{q}^{(i)}+qh-qh'$, $i =1, 2, 3$, where $\alpha_{q}^{(1)} = 0$, $\alpha_{q}^{(2)} = c_{q}$, and $\alpha_{q}^{(3)}=-c_{q}$. We find that
\begin{equation}\label{SUM.E.Q.H}
\sum_{q\in \mathcal{Q}_{H}}\sum_{\alpha\in I_{q}-I_{q}}E_{64K^{2}H^{2}} (\alpha+qh-qh';H)
=\sum_{i=1}^{3} \sum_{q\in \mathcal{Q}_{H}}E_{64K^{2}H^{2}} (m_{q}^{(i)};H)
\ll \frac{\#\mathcal{Q}_{H}\log H}{H^{M-2}}.
\end{equation}Therefore
\[
T_{1}\ll \frac{y}{H^{2}} H^{2} \frac{\#\mathcal{Q}_{H}\log H}{H^{M-2}}=\frac{y\#\mathcal{Q}_{H}\log H}{H^{M-2}}.
\]Thus, \eqref{ii.BASIC.j.1} is proved.

Now we consider the case $j=2$ in \eqref{L1:lambda}, which is
\begin{equation}\label{ii.BASIC.j.2}
\mathbb{E}\sum_{q\in \mathcal{Q}_{H}} \bigg(\sum_{-(K+1)y< n\leq y}\bm{\lambda}'(H; q, n)\bigg)^{2}= \bigg(1 +
 O\bigg(\frac{\log H}{H^{M-2}}\bigg)\bigg) ((K+2)y)^{2}\# \mathcal{Q}_{H}.
\end{equation}The left-hand side is expanded as
\[
\mathbb{E}\sum_{q\in \mathcal{Q}_{H}} \sum_{-(K+1)y< n_{1}, n_{2}\leq y}
\frac{1_{\mathbf{AP}'(KH; q, n_{1})\cup \mathbf{AP}'(KH; q, n_{2})\subset \mathbf{S}'_{2}}}{\sigma_{2}^{\# \mathbf{AP}'(KH; q, n_{1})+ \# \mathbf{AP}'(KH; q, n_{2})}}=T.
\] Arguing as in the case $j=1$, we obtain
\begin{align*}
T&=\sum_{q\in \mathcal{Q}_{H}} \sum_{-(K+1)y< n_{1}, n_{2}\leq y}
\sum_{d_{1}\textup{(mod $P_{1}$)}}
\frac{\mathbb{P}(\mathbf{d}_{1}= d_{1})}{\sigma_{2}^{\# \textup{AP}'(KH; q, n_{1})+ \# \textup{AP}'(KH; q, n_{2})}}\\
&\qquad\qquad\times \mathbb{P}\big(\textup{AP}'(KH;q,n_{1})\cup \textup{AP}'(KH;q,n_{2})\subset \mathbf{S}'_{2} \big)\\
& =\sum_{q\in \mathcal{Q}_{H}} \sum_{-(K+1)y< n_{1}, n_{2}\leq y}
\sum_{d_{1}\textup{(mod $P_{1}$)}} \mathbb{P}(\mathbf{d}_{1}= d_{1}) A(q,n_{1}, n_{2}, d_{1}).
\end{align*}

Fix $q$, $n_{1}$, $n_{2}$, and $d_{1}$. We put
\[
\mathcal{U} = \textup{AP}'(KH;q,n_{1})\cup \textup{AP}'(KH;q,n_{2})
\]and $\mathcal{V} = \mathcal{V}_{1}\cup \mathcal{V}_{2}$, where
\[
\mathcal{V}_{j} = \bigsqcup_{i=1}^{2}\{ n_j+ \alpha_{q, i}+ qh: 1\leq h \leq KH\},\qquad j=1, 2.
\]

We call a triple $(n_{1}, n_{2}, q)$ \emph{good}, if $\mathcal{V}_{1}\cap \mathcal{V}_{2} = \emptyset$, and \emph{bad} otherwise. Suppose that $(n_{1}, n_{2}, q)$ is bad. Then
\[
n_{1}+q h' = n_{2}+ \alpha + q h''
\] for some $\alpha\in I_{q}-I_{q}$ and $1 \leq h', h'' \leq KH$. Therefore, for each $q\in \mathcal{Q}_{H}$, the number of $n_{1}, n_{2}\in (-(K+1)y, y]$ such that $(n_{1}, n_{2}, q)$ is bad is
\begin{equation}\label{BAD.EST}
\ll \sum_{-(K+1)y < n_{2} \leq y} \sum_{\alpha\in I_{q}-I_{q}}
\sum_{\substack{-(K+1)y < n_{1}\leq y\\ n_{1}\equiv n_{2}+ \alpha\textup{\,(mod $q$)}}}1
\ll \sum_{-(K+1)y < n_{2} \leq y} \frac{y}{q} \ll y H,
\end{equation}since $y/q \asymp H$ for $q\in \mathcal{Q}_{H}$.

Using \eqref{H_range} and \eqref{SIGMA.2}, we have
\[
A(q, n_{1}, n_{2}, d_{1}) \leq  \sigma_{2}^{-4KH}
\leq  e^{5K (\log x)^{1/2}}.
\] We obtain
\begin{align*}
\sum_{q\in \mathcal{Q}_{H}} \sum_{\substack{-(K+1)y< n_{1}, n_{2}\leq y\\ (n_{1}, n_{2}, q)\text{\, bad}}}&
\sum_{d_{1}\textup{(mod $P_{1}$)}}\mathbb{P}(\mathbf{d}_{1}= d_{1}) A(q,n_{1}, n_{2}, d_{1})\\
&\ll
yH e^{5K (\log x)^{1/2}} \#\mathcal{Q}_{H}\ll \frac{y^{2}\#\mathcal{Q}_{H}\log H}{H^{M-2}}.
\end{align*}

If $(n_{1}, n_{2}, q)$ is good, then $\textup{AP}'(KH;q,n_{1})$ and  $\textup{AP}'(KH;q,n_{2})$ do not intersect. Therefore
\[
\#\mathcal{U} = \# \textup{AP}'(KH;q,n_{1})+ \# \textup{AP}'(KH;q,n_{2}).
\]Applying Lemma \ref{L1.PREP}, we obtain
\begin{align*}
\sum_{q\in \mathcal{Q}_{H}} \sum_{\substack{-(K+1)y< n_{1}, n_{2}\leq y\\ (n_{1}, n_{2}, q)\text{\, good}}}
&\sum_{d_{1}\textup{(mod $P_{1}$)}}\mathbb{P}(\mathbf{d}_{1}= d_{1}) A(q,n_{1}, n_{2}, d_{1})\\
=& \sum_{q\in \mathcal{Q}_{H}} \sum_{\substack{-(K+1)y< n_{1}, n_{2}\leq y\\ (n_{1}, n_{2}, q)\text{\, good}}} \bigg(1+ O\bigg(\frac{1}{H^{M-2}}\bigg)\bigg)\\
&+O\bigg(\frac{1}{H^{2}}\sum_{q\in \mathcal{Q}_{H}} \sum_{-(K+1)y< n_{1}, n_{2}\leq y}
\sum_{v, v' \in \mathcal{V}} E_{64 K^{2}H^{2}} (v-v'; H)\bigg).
\end{align*}

By \eqref{BAD.EST}, we have
\[
\sum_{q\in \mathcal{Q}_{H}} \sum_{\substack{-(K+1)y< n_{1}, n_{2}\leq y\\ (n_{1}, n_{2}, q)\text{\, bad}}} \bigg(1+ O\bigg(\frac{1}{H^{M-2}}\bigg)\bigg)\ll y H \#\mathcal{Q}_{H} \ll \frac{y^{2}\#\mathcal{Q}_{H}}{H^{M-2}}.
\] We obtain
\begin{align*}
&\sum_{q\in \mathcal{Q}_{H}} \sum_{\substack{-(K+1)y< n_{1}, n_{2}\leq y\\ (n_{1}, n_{2}, q)\text{\, good}}} \bigg(1+ O\bigg(\frac{1}{H^{M-2}}\bigg)\bigg) = \sum_{q\in \mathcal{Q}_{H}} \sum_{-(K+1)y< n_{1}, n_{2}\leq y} \bigg(1+ O\bigg(\frac{1}{H^{M-2}}\bigg)\bigg)\\
 &- \sum_{q\in \mathcal{Q}_{H}} \sum_{\substack{-(K+1)y< n_{1}, n_{2}\leq y\\ (n_{1}, n_{2}, q)\text{\, bad}}} \bigg(1+ O\bigg(\frac{1}{H^{M-2}}\bigg)\bigg) = \bigg(1+ O\bigg(\frac{1}{H^{M-2}}\bigg)\bigg) ((K+2)y)^{2}\#\mathcal{Q}_{H}.
\end{align*}

It is easy to see that
\begin{align*}
&\frac{1}{H^{2}}\sum_{q\in \mathcal{Q}_{H}} \sum_{-(K+1)y< n_{1}, n_{2}\leq y}
\sum_{v, v' \in \mathcal{V}} E_{64 K^{2}H^{2}} (v-v'; H)\\
&\ll \frac{1}{H^{2}}\sum_{q\in \mathcal{Q}_{H}} \sum_{-(K+1)y< n_{1}, n_{2}\leq y}
\sum_{\alpha \in I_{q} - I_{q}} \sum_{1\leq h, h' \leq KH}E_{64 K^{2}H^{2}} (\alpha + q h - qh'; H)\\
&+\frac{1}{H^{2}}\sum_{q\in \mathcal{Q}_{H}} \sum_{-(K+1)y< n_{1}, n_{2}\leq y}
\sum_{\alpha \in I_{q} - I_{q}} \sum_{1\leq h, h' \leq KH}E_{64 K^{2}H^{2}} (n_{1} - n_{2}+\alpha + q h - qh'; H)\\
&= R_{1}+R_{2}.
\end{align*}

Let us estimate $R_{1}$. Fix $n_{1}$, $n_{2}$, $h$, and $h'$. We have $|\alpha+ q h - q h'|\leq (K+1)y\leq x \log x$. We apply Lemma \ref{L4:PREP} with $w=0$ and $m_{q}^{(i)}=\alpha_{q}^{(i)}+qh-qh'$, $i =1, 2, 3$, where $\alpha_{q}^{(1)} = 0$, $\alpha_{q}^{(2)} = c_{q}$, and $\alpha_{q}^{(3)}=-c_{q}$. We obtain
\[
\sum_{q\in \mathcal{Q}_{H}}\sum_{\alpha\in I_{q}-I_{q}}E_{64K^{2}H^{2}} (\alpha+qh-qh';H)
=\sum_{i=1}^{3} \sum_{q\in \mathcal{Q}_{H}}E_{64K^{2}H^{2}} (m_{q}^{(i)};H)
\ll \frac{\#\mathcal{Q}_{H}\log H}{H^{M-2}}.
\]Therefore
\[
R_{1} \ll \frac{\#\mathcal{Q}_{H}\log H}{H^{M-2}} y^{2}.
\]

Let us estimate $R_{2}$. Fix $h$, $h'$, $q$, $\alpha$, and $n_{2}$. We see that
\[
\#\{-(K+1) y < n_{1}\leq y: n_{1}\equiv a\textup{ (mod $n$)}\}\ll \frac{y}{n}+ 1\leq \frac{y}{\varphi (n)}+1
\]for any $n\in \mathcal{D}_{H}\backslash \{1\}$ and any $a\in \mathbb{Z}/ n \mathbb{Z}$. Applying Lemma \ref{L2:PREP} with $\{m_{t}\} = \{ n_{1}: -(K+1)y< n_{1}\leq y\}$, $X = y$, $R=1$, $A=64K^{2}H^{2}$, and $j=-n_{2}+\alpha+ qh-qh'$, we get
\begin{align*}
\sum_{-(K+1)y< n_{1}\leq y}& E_{64 K^{2}H^{2}} (n_{1} - n_{2}+\alpha+ qh - qh'; H)\\
&\ll
y \frac{H^{2}}{H^{M}}+ \textup{exp}(192 K^{2}H^{2}\log\log y)\ll \frac{y}{H^{M-2}}.
\end{align*}Therefore
\[
R_{2} \ll \frac{y^{2}\#\mathcal{Q}_{H}}{H^{M-2}},
\]and \eqref{ii.BASIC.j.2} is proved. The statement \eqref{L1:lambda.2} is proved similarly.

\section{The proof of part \textup{(iii)} of Proposition \ref{P2}}
Now we consider the case $j=1$ in \eqref{L3:lamda.AP.1}, which is
\begin{align*}
\mathbb{E}\sum_{n\in \mathbf{S}'\cap [1, y]} \sum_{q\in \mathcal{Q}_{H}} \sum_{h\leq KH}\bm{\lambda}'(H; q, &n-\alpha_{q, i}-qh)\notag\\
 &= \bigg(1 + O\bigg(\frac{\log H}{H^{M-2}}\bigg)\bigg) \#\mathcal{Q}_{H}[KH]\sigma_{1}y.
\end{align*} It is enough to show that, for any $h\leq KH$,
\begin{align}\label{AP.j.1}
\mathbb{E}\sum_{n\in \mathbf{S}'\cap [1, y]} \sum_{q\in \mathcal{Q}_{H}} \bm{\lambda}'(H; q, &n-\alpha_{q, i}-qh)\notag\\
 &= \bigg(1 + O\bigg(\frac{\log H}{H^{M-2}}\bigg)\bigg) \#\mathcal{Q}_{H}\sigma_{1}y.
\end{align} By \eqref{Def_lambda}, the left-hand side is equal to
\begin{align*}
\mathbb{E}\sum_{n\in \mathbf{S}'\cap [1, y]} \sum_{q\in \mathcal{Q}_{H}} \frac{1_{\mathbf{AP}'(KH; q, n-\alpha_{q,i} - qh)\subset \mathbf{S}'_{2}}}{\sigma_{2}^{\# \mathbf{AP}'(KH; q, n-\alpha_{q,i}-qh)}}=T.
\end{align*}We have
\[
T=\sum_{d_{1}\textup{\,(mod $P_{1})$}}\mathbb{P}(\mathbf{d}_{1}=d_{1})\sum_{d_{2}\textup{\,(mod $P_{2})$}}\mathbb{P}(\mathbf{d}_{2}=d_{2})\sum_{n\in \textup{S}'\cap [1, y]} \sum_{q\in \mathcal{Q}_{H}} \frac{1_{\textup{AP}'(KH; q, n-\alpha_{q,i} - qh)\subset \textup{S}'_{2}}}{\sigma_{2}^{\# \textup{AP}'(KH; q, n-\alpha_{q,i}-qh)}}.
\]

 By \eqref{DECOMP.FINAL}, the condition $n\in \textup{S}' \cap [1,y]$ implies that $n\in \textup{S}'_{1}\cap [1,y]$. On the other hand, if $n\in \textup{S}'_{1}$, then $n\in \textup{AP}'(KH; q, n-\alpha_{q,i}-qh)$, and thus the condition $n\in \textup{S}'_{2}$ is contained in the condition $\textup{AP}'(KH; q, n-\alpha_{q,i}-qh) \subset \textup{S}'_{2}$. For each $d_{1}$, $d_{2}$, $q$, we get
 \[
 \sum_{n\in \textup{S}'\cap [1, y]}  \frac{1_{\textup{AP}'(KH; q, n-\alpha_{q,i} - qh)\subset \textup{S}'_{2}}}{\sigma_{2}^{\# \textup{AP}'(KH; q, n-\alpha_{q,i}-qh)}}=
 \sum_{n\in \textup{S}'_{1}\cap [1, y]}  \frac{1_{\textup{AP}'(KH; q, n-\alpha_{q,i} - qh)\subset \textup{S}'_{2}}}{\sigma_{2}^{\# \textup{AP}'(KH; q, n-\alpha_{q,i}-qh)}}.
 \]We obtain
\begin{align*}
T&=\sum_{d_{1}\textup{\,(mod $P_{1})$}}\mathbb{P}(\mathbf{d}_{1}=d_{1})\sum_{d_{2}\textup{\,(mod $P_{2})$}}\mathbb{P}(\mathbf{d}_{2}=d_{2})\sum_{n\in \textup{S}'_{1}\cap [1, y]} \sum_{q\in \mathcal{Q}_{H}} \frac{1_{\textup{AP}'(KH; q, n-\alpha_{q,i} - qh)\subset \textup{S}'_{2}}}{\sigma_{2}^{\# \textup{AP}'(KH; q, n-\alpha_{q,i}-qh)}}\\
&=\sum_{d_{1}\textup{\,(mod $P_{1})$}}\mathbb{P}(\mathbf{d}_{1}=d_{1})\sum_{n\in \textup{S}'_{1}\cap [1, y]} \sum_{q\in \mathcal{Q}_{H}} \frac{\mathbb{P}(\textup{AP}'(KH; q, n-\alpha_{q,i} - qh)\subset \mathbf{S}'_{2})}{\sigma_{2}^{\# \textup{AP}'(KH; q, n-\alpha_{q,i}-qh)}}.
\end{align*}We apply Lemma \ref{L1.PREP} with $\mathcal{U} = \textup{AP}'(KH; q, n-\alpha_{q,i} - qh)$ and
\[
\mathcal{V}=\bigsqcup_{s=1}^{2}\{n-\alpha_{q,i} - qh +\alpha_{q,s}+ qh': 1\leq h' \leq KH\}.
\]Hence, $l=2[KH]$. We see that
\begin{align*}
&\mathbb{P}(\textup{AP}'(KH; q, n-\alpha_{q,i} - qh)\subset \mathbf{S}'_{2})=\sigma_{2}^{\# \textup{AP}'(KH; q, n-\alpha_{q,i}-qh)}\\
&\times\bigg(1+ O\bigg(\frac{1}{H^{M-2}}\bigg)+O\bigg(\frac{1}{H^{2}}\sum_{\alpha\in I_{q}-I_{q}}\sum_{h', h'' \leq KH} E_{64 K^{2}H^{2}}(\alpha + q h' - q h''; H)\bigg)\bigg).
\end{align*}

 Arguing as in \eqref{SUM.E.Q.H}, we obtain
\[
T=\bigg(1+O\bigg(\frac{\log H}{H^{M-2}}\bigg)\bigg)\#\mathcal{Q}_{H}\mathbb{E}\#(\mathbf{S}'_{1}\cap[1,y])
=\bigg(1+O\bigg(\frac{\log H}{H^{M-2}}\bigg)\bigg)\#\mathcal{Q}_{H}\sigma_{1}y,
\]and \eqref{AP.j.1} is proved. The case $j=1$ in \eqref{L3:lamda.AP.2} is proved similarly.

Now we consider the case $j=2$ in \eqref{L3:lamda.AP.1}, which is
\begin{align}\label{j=2.HOME}
\sum_{h_{1}, h_{2}\leq KH}\mathbb{E}\sum_{n\in \mathbf{S}'\cap [1, y]} \sum_{q_{1},q_{2}\in \mathcal{Q}_{H}}&\bm{\lambda}'(H; q_{1}, n-\alpha_{q_{1}, i}-q_{1}h_{1})\bm{\lambda}'(H; q_{2}, n-\alpha_{q_{2}, i}-q_{2}h_{2})\notag\\
 &= \bigg(1 + O\bigg(\frac{\log H}{H^{M-2}}\bigg)\bigg) (\#\mathcal{Q}_{H}[KH])^{2}\frac{\sigma_{1}}{\sigma_{2}} y.
\end{align}Arguing as above, we see that the left-hand side is equal to
\begin{align}\label{FINAL.T}
&\sum_{h_{1}, h_{2}\leq KH}\sum_{d_{1}\textup{\,(mod $P_{1})$}}\mathbb{P}(\mathbf{d}_{1}=d_{1})\sum_{d_{2}\textup{\,(mod $P_{2})$}}\mathbb{P}(\mathbf{d}_{2}=d_{2})\notag\\
&\times \sum_{n\in \textup{S}'_{1}\cap [1, y]} \sum_{q_{1}, q_{2}\in \mathcal{Q}_{H}} \frac{1_{\textup{AP}'(KH; q_{1}, n-\alpha_{q_{1},i} - q_{1}h_{1})\cup \textup{AP}'(KH; q_{2}, n-\alpha_{q_{2},i} - q_{2}h_{2}) \subset \textup{S}'_{2}}}{\sigma_{2}^{\# \textup{AP}'(KH; q_{1}, n-\alpha_{q_{1},i}-q_{1}h_{1})+\# \textup{AP}'(KH; q_{2}, n-\alpha_{q_{2},i}-q_{2}h_{2}) }}\notag\\
&=\sum_{h_{1}, h_{2}\leq KH}\sum_{d_{1}\textup{\,(mod $P_{1})$}}\mathbb{P}(\mathbf{d}_{1}=d_{1})\sum_{n\in \textup{S}'_{1}\cap [1, y]} \sum_{q_{1}, q_{2}\in \mathcal{Q}_{H}}\notag\\
&  \frac{\mathbb{P}(\textup{AP}'(KH; q_{1}, n-\alpha_{q_{1},i} - q_{1}h_{1})\cup \textup{AP}'(KH; q_{2}, n-\alpha_{q_{2},i} - q_{2}h_{2}) \subset \mathbf{S}'_{2})}{\sigma_{2}^{\# \textup{AP}'(KH; q_{1}, n-\alpha_{q_{1},i}-q_{1}h_{1})+\# \textup{AP}'(KH; q_{2}, n-\alpha_{q_{2},i}-q_{2}h_{2}) }}=T.
\end{align} The contribution from $q_{1}=q_{2}$ is
\[
\ll \sigma_{2}^{-4KH}\#\mathcal{Q}_{H}\sigma_{1}y H^{2}\ll \frac{\log H}{H^{M-2}}(\#\mathcal{Q}_{H}H)^{2}\frac{\sigma_{1}}{\sigma_{2}} y.
\]

For $q\in\mathcal{Q}_{H}$ and $h\leq KH$, we put
\[
\mathcal{V}(q,h)=\bigsqcup_{j=1}^{2}\{\alpha_{q,j}-\alpha_{q,i}+q (h'-h): h' \leq KH\}.
\]We observe that $0\in \mathcal{V}(q,h)$. We call a pair $(q_{1}, q_{2})\in \mathcal{Q}_{H}^{2}$ with $q_{1}\neq q_{2}$ \emph{good}, if for all $h_{1}, h_{2} \leq KH$  we have
\[
\mathcal{V}(q_{1}, h_{1})\cap \mathcal{V}(q_{2}, h_{2}) = \{0\},
\]and call $(q_{1},q_{2})$ \emph{bad} otherwise. If a pair $(q_{1}, q_{2})$ is bad, then there are $1\leq h_{2}, h'_{2}\leq KH$ and $j\in\{1,2\}$ such that
\[
n_{0} = \alpha_{q_{2}, j} - \alpha_{q_{2}, i}+ q_{2}(h'_{2} - h_{2})\neq 0
\]and $n_{0}\textup{(mod $q_{1}$)}\in I_{q_{1}}- I_{q_{1}}$. Thus, $q_{1}$ divides $F(n_{0})=n_{0}(an_{0}+b)(-an_{0}+b)$. Since $n_{0}\neq 0$, we have $F(n_{0})\neq 0$. Hence, the number of such $q_{1}$ is $\ll \log y$. We see that the number of bad pairs $(q_{1},q_{2})$ is
\begin{equation}\label{BAD.PAIRS}
\ll \#\mathcal{Q}_{H}H^{2}\log y.
\end{equation} Hence, the contribution to \eqref{FINAL.T} from bad pairs is
\[
\ll \sigma_{2}^{-4KH}\#\mathcal{Q}_{H}H^{4}(\log y) \sigma_{1}y\ll \frac{\log H}{H^{M-2}}(\#\mathcal{Q}_{H}H)^{2}\frac{\sigma_{1}}{\sigma_{2}} y.
\]

It remains to estimate the contribution to \eqref{FINAL.T} from good pairs. Note that if $(q_{1},q_{2})$ is a good pair, then for any $h_{1}$, $h_{2}$, $d_{1}$, $n$ the set
\[
\textup{AP}'(KH; q_{1}, n-\alpha_{q_{1},i} - q_{1}h_{1})\cup \textup{AP}'(KH; q_{2}, n-\alpha_{q_{2},i} - q_{2}h_{2})
\]has size $\# \textup{AP}'(KH; q_{1}, n-\alpha_{q_{1},i}-q_{1}h_{1})+\# \textup{AP}'(KH; q_{2}, n-\alpha_{q_{2},i}-q_{2}h_{2}) -1$. Applying Lemma \ref{L1.PREP} and \eqref{BAD.PAIRS}, we see that the sum in \eqref{FINAL.T} corresponding to good pairs $(q_{1}, q_{2})$ is equal to
\[
(\#\mathcal{Q}_{H}[KH])^{2}\frac{\sigma_{1}}{\sigma_{2}}y\bigg(1+ O\bigg(\frac{\log H}{H^{M-2}}\bigg)\bigg) +R,
\]where
\begin{equation}\label{R.FINAL}
R=O\bigg(\frac{\sigma_{1}y}{\sigma_{2}H^{2}}R_{0}\bigg)
\end{equation} and
\begin{align*}
R_{0} &= \sum_{h_{1}, h_{2}\leq KH} \sum_{q_{1}, q_{2}\in \mathcal{Q}_{H}}\bigg(\sum_{h'_{1},h'_{2}\leq KH}\sum_{\alpha_{1}\in I_{q_{1}}-I_{q_{1}}} E_{64K^{2}H^{2}}(\alpha_{1}+q_{1}h'_{1}-q_{1}h'_{2};H)\\
&\ \ \ +\sum_{h'_{1},h'_{2}\leq KH}\sum_{\alpha_{2}\in I_{q_{2}}-I_{q_{2}}} E_{64K^{2}H^{2}}(\alpha_{2}+q_{2}h'_{1}-q_{2}h'_{2};H)\\
&\ \ \ +\sum_{h'_{1},h'_{2}\leq KH}\sum_{\alpha_{1}\in I_{q_{1}}-I_{q_{1}}}\sum_{\alpha_{2}\in I_{q_{2}}-I_{q_{2}}}  E_{64K^{2}H^{2}}(\alpha_{1}-\alpha_{2}-q_{1}h_{1}+q_{1}h'_{1}+q_{2}h_{2}-q_{2}h'_{2};H)\bigg)\\
&=R_{1}+R_{2}+R_{3}.
\end{align*}Arguing as in \eqref{SUM.E.Q.H}, we see that
\[
\sum_{q_{1}\in \mathcal{Q}_{H}}\sum_{\alpha_{1}\in I_{q_{1}}-I_{q_{1}}}E_{64K^{2}H^{2}}(\alpha_{1}+q_{1}h'_{1}-q_{1}h'_{2};H)
\ll \frac{\#\mathcal{Q}_{H}\log H}{H^{M-2}}.
\]Therefore
\[
R_{1}\ll \frac{(\#\mathcal{Q}_{H})^{2}H^{4}\log H}{H^{M-2}}.
\]Arguing similarly,  we obtain the same bound for $R_{2}$.

Now we estimate $R_{3}$. Using the fact that $E_{A}(m;H)=0$ for $m=0$, we find that
\[
R_{3}\leq R_{4}+R_{5}+R_{6}+R_{7},
\]where
\begin{align*}
R_{4}&= \sum_{\substack{h_{1}, h_{2}, h'_{1},h'_{2}\leq KH\\h_{1}\neq h'_{1}}} \sum_{q_{1}, q_{2}\in \mathcal{Q}_{H}}
\sum_{\substack{\alpha_{1}\in I_{q_{1}}-I_{q_{1}}\\\alpha_{2}\in I_{q_{2}}-I_{q_{2}}}}  E_{64K^{2}H^{2}}(\alpha_{1}-\alpha_{2}-q_{1}h_{1}+q_{1}h'_{1}+q_{2}h_{2}-q_{2}h'_{2};H),\\
 R_{5}&= \sum_{\substack{h_{1}, h_{2}, h'_{1},h'_{2}\leq KH\\h_{2}\neq h'_{2}}} \sum_{q_{1}, q_{2}\in \mathcal{Q}_{H}}
\sum_{\substack{\alpha_{1}\in I_{q_{1}}-I_{q_{1}}\\\alpha_{2}\in I_{q_{2}}-I_{q_{2}}}}  E_{64K^{2}H^{2}}(\alpha_{1}-\alpha_{2}-q_{1}h_{1}+q_{1}h'_{1}+q_{2}h_{2}-q_{2}h'_{2};H),\\
R_{6}&= \sum_{h,h'\leq KH} \sum_{q_{1}, q_{2}\in \mathcal{Q}_{H}}
\sum_{\alpha_{2}\in I_{q_{2}}-I_{q_{2}}} \sum_{\substack{\alpha_{1}\in I_{q_{1}}-I_{q_{1}}\\ \alpha_{1}\neq 0}} E_{64K^{2}H^{2}}(\alpha_{1}-\alpha_{2};H),\\
R_{7}&= \sum_{h,h'\leq KH} \sum_{q_{1}, q_{2}\in \mathcal{Q}_{H}}
\sum_{\alpha_{1}\in I_{q_{1}}-I_{q_{1}}} \sum_{\substack{\alpha_{2}\in I_{q_{2}}-I_{q_{2}}\\ \alpha_{2}\neq 0}} E_{64K^{2}H^{2}}(\alpha_{1}-\alpha_{2};H).
\end{align*}

 Let us estimate $R_{4}$. Fix $h_{1}$, $h_{2}$, $h'_{1}$, $h'_{2}$, $q_{2}$, and $\alpha_{2}$. We apply Lemma \ref{L4:PREP} with $w=-\alpha_{2}+q_{2}h_{2}-q_{2}h'_{2}$ and $m_{q_{1}}^{(i)}=\alpha_{q_{1}}^{(i)}+q_{1}h'_{1}-q_{1}h_{1}$, $i =1, 2, 3$, where $\alpha_{q_{1}}^{(1)} = 0$, $\alpha_{q_{1}}^{(2)} = c_{q_{1}}$, and $\alpha_{q_{1}}^{(3)}=-c_{q_{1}}$. We find that
\begin{align*}
\sum_{q_{1}\in \mathcal{Q}_{H}}\sum_{\alpha_{1}\in I_{q_{1}}-I_{q_{1}}}&E_{64K^{2}H^{2}}(\alpha_{1}-\alpha_{2}-q_{1}h_{1}+q_{1}h'_{1}+q_{2}h_{2}-q_{2}h'_{2};H)\\
&\ \ \ \ \ =\sum_{i=1}^{3} \sum_{q_{1}\in \mathcal{Q}_{H}}E_{64K^{2}H^{2}} (m_{q_{1}}^{(i)}+w;H)
\ll \frac{\#\mathcal{Q}_{H}\log H}{H^{M-2}}.
\end{align*}We obtain
\[
R_{4}\ll \frac{(\#\mathcal{Q}_{H})^{2}H^{4}\log H}{H^{M-2}}.
\]Arguing similarly, we obtain the same bound for $R_{5}$.

Let us estimate $R_{6}$. Fix $h$, $h'$, $q_{2}$, and $\alpha_{2}$. We apply Lemma \ref{L4:PREP} with $w=-\alpha_{2}$ and $m_{q_{1}}^{(i)}=\alpha_{q_{1}}^{(i)}$, $i =1, 2$, where $\alpha_{q_{1}}^{(1)} = c_{q_{1}}$, $\alpha_{q_{1}}^{(2)} = -c_{q_{1}}$. We find that
\[
\sum_{q_{1}\in \mathcal{Q}_{H}}\sum_{\substack{\alpha_{1}\in I_{q_{1}}-I_{q_{1}}\\\alpha_{1}\neq 0}}E_{64K^{2}H^{2}}(\alpha_{1}-\alpha_{2};H) =\sum_{i=1}^{2} \sum_{q_{1}\in \mathcal{Q}_{H}}E_{64K^{2}H^{2}} (m_{q_{1}}^{(i)}+w;H)
\ll \frac{\#\mathcal{Q}_{H}\log H}{H^{M-2}}.
\]We obtain
\[
R_{6}\ll \frac{(\#\mathcal{Q}_{H})^{2}H^{2}\log H}{H^{M-2}}.
\]Arguing similarly, we obtain the same bound for $R_{7}$.

Gathering estimates for $R_{i}$, $i=1,\ldots, 7$, together,  we obtain
\[
R_{0}\ll \frac{(\#\mathcal{Q}_{H})^{2}H^{4}\log H}{H^{M-2}}.
\]Thus, by \eqref{R.FINAL},
\[
R\ll \frac{\sigma_{1}y}{\sigma_{2}H^{2}}R_{0}\ll \frac{(\#\mathcal{Q}_{H}H)^{2}\log H}{H^{M-2}}\frac{\sigma_{1}y}{\sigma_{2}}.
\] We obtain
\[
T=(\#\mathcal{Q}_{H}[KH])^{2}\frac{\sigma_{1}}{\sigma_{2}}y\bigg(1+ O\bigg(\frac{\log H}{H^{M-2}}\bigg)\bigg)
\]and \eqref{j=2.HOME} is proved. The case $j=2$ in \eqref{L3:lamda.AP.2} is proved similarly. This completes the proof of Proposition \ref{P2}.

\section{Acknowledgements}

This work is an output of a research project (HSE-BR-2025-024) implemented as part of the Basic Research Program at HSE University.

\end{document}